\title{%
	\textsc{%
		Lattice simplices with a fixed positive number of interior lattice points: \\ A nearly optimal volume bound
	}
}
\author{%
	Gennadiy Averkov\footnote{Faculty of Mathematics, Otto-von-Guericke-Universit\"at Magdeburg, Universit\"atsplatz 2, 39106 Magdeburg, Germany. Emails: averkov@ovgu.de, jan.kruempelmann@ovgu.de, benjamin.nill@ovgu.de}%
	\qquad Jan Kr\"umpelmann${}^\ast$%
	\qquad Benjamin Nill${}^\ast$
}
\newcommand{\ORD}{\operatorname{ORD}}
\newcommand{\PS}{\operatorname{PS}}
\newcommand{\rmcmd}[1]{\mathop{\mathrm{#1}}\nolimits}
\newcommand{\R}{\mathbb{R}}
\newcommand{\N}{\mathbb{N}}
\newcommand{\Z}{\mathbb{Z}}
\newcommand{\cP}{\mathcal{P}}
\newcommand{\cS}{\mathcal{S}}
\newcommand{\setcond}[2]{\left\{#1 \, : \, #2 \right\}}
\newcommand{\vol}{\rmcmd{vol}}
\newcommand{\vertset}[1]{\operatorname{vert}(#1)}
\newcommand{\conv}{\operatorname{conv}}
\newcommand{\intr}[1]{\operatorname{int}(#1)}
\newcommand{\Pd}[1]{\mathcal{P}^d({#1})}
\newcommand{\Sd}[1]{\mathcal{S}^d({#1})}
\newcommand{\eps}{\varepsilon}
\newtheorem{nn}{}[section]
\newtheorem{theorem}[nn]{Theorem}
\newtheorem{lemma}[nn]{Lemma}
\newtheorem{conjecture}[nn]{Conjecture}
\newtheorem{remark}[nn]{Remark}
\newtheorem*{acknowledgments*}{Acknowledgments}
\numberwithin{equation}{section}
\begin{document}

\maketitle

\begin{abstract}
We give an explicit upper bound on the volume of lattice simplices with fixed positive number of interior lattice points. The bound differs from the conjectural sharp upper bound only by a linear factor in the dimension. This improves significantly upon the previously best results by Pikhurko from 2001.
\end{abstract}

\section{Introduction}

Throughout, the positive integer $d \ge 1$ stands for the dimension of the ambient space $\R^d$. We consider problems for lattice polytopes in $\R^d$ and fix our underlying lattice to be $\Z^d$. We call the elements of $\Z^d$ \emph{lattice points} or \emph{lattice vectors}. A map $\phi : \R^d \to \R^d$ is called \emph{unimodular transformation} if $\phi$ is an affine bijection satisfying $\phi(\Z^d) = \Z^d$. By $o$ we denote the zero vector and $e_1,\ldots,e_d$ are the standard unit vectors in $\R^d$. 

A polytope $P \subseteq \R^d$ is called a \emph{lattice polytope} (with respect to the lattice $\Z^d$) if all vertices of $P$ belong to $\Z^d$. We will discuss the relationship of the Euclidean volume $\vol(P)$ and the number of interior lattice points for lattice polytopes $P$. Both these values are invariant up to unimodular transformations of $P$. 

Given positive integers $d, k \ge 1$, let $\cP^d(k)$ be the family of all $d$-dimensional lattice polytopes in $\R^d$ that have exactly $k$ interior lattice points and let $\cS^d(k)$ be the family of all simplices belonging to $\cP^d(k)$. 
It is known since the work of Hensley \cite{MR688412} that, for all positive integers $d, k$, the values 
\begin{align*}
	p(d,k) &:= \max \setcond{\vol(P)}{P \in \Pd{k}} & &\text{and} & 
	 s(d,k) &:= \max \setcond{\vol(S)}{S \in \Sd{k}}
\end{align*}
are finite. Exact determination of $p(d,k)$ and $s(d,k)$ is a long-standing problem \cite{MR688412, MR1138580, MR1996360, MR2967480, MR3318147} with additional motivation from integer optimization \cite[Section~2.7]{MR3318147} and toric geometry \cite[Section~2.6]{MR3318147}, cf. \cite{ben,Ambro}. 

In 1982, Zaks, Perles and Wills \cite{MR651251} introduced the simplex 
\begin{equation}
\label{eq:zpw:simplex}
S_{d,k} := \conv \bigl(o, s_1 e_1, \ldots, s_{d-1} e_{d-1}, (k+1) (s_d - 1)e_d \bigr),
\end{equation}
arising from the \emph{Sylvester sequence} $(s_i)_{i =1, 2, \ldots}$,
\begin{align}
& s_i := 
\begin{cases} 2, & \text{for} \ i=1,
\\ 1 + s_1 \ldots s_{i-1}  &  \text{for} \ i \ge 2.
\end{cases}
\label{eq:sylv:seq}
\end{align}
They observed that $S_{d,k}$ belongs to $\Sd{k}$ and has the volume 
\[
\vol(S_{d,k}) = (k+1) \frac{(s_d-1)^2}{d!}
\] 
of the asymptotic order $k 2^{2^{\Theta(d)}}$. This example shows that both $p(d,k)$ and $s(d,k)$ grow doubly-exponentially in the dimension $d$. By our main result, we want to partially verify the plausibility of the following conjecture that can be traced back to \cite{MR651251}. This explicit version can be found in \cite[Conj.~1.5]{arXiv:1612.08918}.

\begin{conjecture}
	\label{conj:bk} For every $d \ge 3$ and $k \ge 1$, one has 
	\[
	p(d,k) = s(d,k) = (k+1) \frac{(s_d-1)^2}{d!}.
	\]
	Furthermore, unless $d=3$ and $k=1$, every polytope $P \in \Pd{k}$ satisying $\vol(P) = p(d,k)$ coincides with $S_{d,k}$ up to unimodular transformations.
\end{conjecture}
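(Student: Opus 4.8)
I would attack the conjecture by induction on the dimension $d$, aiming to exhibit the Sylvester recursion $s_d=1+s_1\cdots s_{d-1}$ as the optimal way to ``spend'' volume when passing from dimension $d-1$ to dimension $d$. It is convenient to work with the \emph{normalized volume} $q:=d!\,\vol(S)$, a positive integer. The statement has three layers --- the exact value of $s(d,k)$, the equality $p(d,k)=s(d,k)$, and uniqueness of the maximizer --- and I would fold the first two into a single inductive claim, since even showing that a volume-maximal member of $\Pd{k}$ may be taken to be a simplex is non-obvious and, as explained below, is one of the main obstructions.

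The engine would have three ingredients. First, I would separate off the factor $k+1$: choosing a primitive $u\in(\Z^d)^\ast$ along which the $k$ interior lattice points of $S$ spread over several parallel lattice hyperplanes and peeling off the corresponding slices ought to reduce everything to simplices with a \emph{single} interior lattice point, with target bound $s(d,k)\le\frac{k+1}{2}\,s(d,1)$. Second, for $k=1$ one writes the unique interior lattice point $z$ of $S=\conv(v_0,\dots,v_d)$ in barycentric coordinates, $z=\sum_{i=0}^{d}\lambda_i v_i$ with $\lambda_i=q_i/q$, $q_i\in\Z_{>0}$, $\sum_i q_i=q$; the absence of \emph{any other} interior lattice point forces the sorted vector $\lambda_0\ge\cdots\ge\lambda_d$ to satisfy a greedy lower bound of Sylvester type --- translates and midpoints built from the $v_i$ and $z$, and more generally lattice points whose barycentric coordinates are small integral perturbations of the $q_i$, must leave the open simplex --- which should pin down $q\le 2(s_d-1)^2$ with the Zaks--Perles--Wills simplex attaining equality. (Equivalently, this is an optimization over the ``weight systems'' $(q_0,\dots,q_d;q)$ of lattice simplices with one interior lattice point, in the spirit of the toric-geometry dictionary mentioned above.) Third, to run the induction on $d$, I would project $S$ along a well-chosen edge direction $v_i-v_j$ onto $\R^{d-1}$: the image $\pi(S)$ is again a lattice $(d-1)$-simplex, $S$ is the region between a convex and a concave graph over $\pi(S)$, so that $\vol(S)=\int_{\pi(S)}h\,dx$ where $h$ is the fiber-length profile, and one bounds $\vol(\pi(S))$ by the inductive hypothesis and $h$ by the number of interior lattice points it may carry.

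The hard part --- and the reason I expect such a programme to reach only a near-optimal bound of the shape $s(d,k)\le c\,d\,(k+1)(s_d-1)^2/d!$ rather than the exact value --- is the equality analysis, which has to be propagated through every level of the induction. The obstruction is structural: the conjectural extremum $S_{d,k}$ does \emph{not} project onto an extremal $(d-1)$-dimensional simplex, as its base $\conv(o,s_1e_1,\dots,s_{d-1}e_{d-1})$ has volume only about half of $s(d-1,1)$; hence an induction built on ``$S$ extremal implies $\pi(S)$ extremal'' breaks down, and one is forced to carry a subtler inductive invariant --- for instance the maximum over projection directions of a suitably weighted volume, or the pair consisting of the base simplex together with its height profile --- whose correct form I do not see offhand. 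Moreover, the slack in the fiber-length estimate can compound over the $d-2$ steps, proving uniqueness requires classifying \emph{all} simultaneously tight configurations, the genuine exception $d=3$, $k=1$ (where $S_{3,1}$ is merely one of several maximizers) together with the low-dimensional base cases must be treated by hand, and excluding non-simplex polytopes from the list of maximizers --- i.e.\ the equality $p(d,k)=s(d,k)$ --- is a separate combinatorial difficulty that the projection step does not touch. Closing the residual linear-in-$d$ gap, and proving uniqueness, looks to need a more global input --- an $h^\ast$-vector or Ehrhart-theoretic identity, or a direct classification of the rational simplices (weight systems) admitting a prescribed interior lattice point --- rather than any refinement of the reduction to dimension $d-1$ alone.
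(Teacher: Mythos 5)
The statement you were asked about is a \emph{conjecture}: the paper does not prove it, and it remains open except in special cases (simplices with $k=1$, and $d=3$ with $k\le 2$, by the cited references); the paper itself only establishes the weaker bound $s(d,k)\le k(d+1)(s_d-1)^2/d!$ via Pikhurko's barycentric volume bound combined with new generalized product-sum inequalities for a well-chosen interior lattice point. Your text is likewise not a proof but a programme sketch, and you concede as much: you expect it to yield only a bound of the shape $c\,d\,(k+1)(s_d-1)^2/d!$ and you do not resolve the equality analysis, the identity $p(d,k)=s(d,k)$, or uniqueness. So there is a genuine gap --- in fact the central claims are exactly the parts left unproved.

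Concretely: (i) the ``peeling off slices'' reduction with target $s(d,k)\le\frac{k+1}{2}\,s(d,1)$ is, given the known value $s(d,1)=2(s_d-1)^2/d!$, precisely equivalent to the conjectural equality for simplices, so assuming it is circular; no mechanism is offered for why slicing along a primitive functional cannot lose more than the factor $\frac{k+1}{2}$, and the paper's authors explicitly remark that they do not see how to bring $k$ into their inequalities without weakening the bounds. (ii) The projection-to-dimension-$(d-1)$ induction is undermined by the obstruction you yourself identify ($S_{d,k}$ does not project to an extremal base), and you do not supply the ``subtler inductive invariant'' that would repair it. (iii) The reduction from polytopes to simplices ($p=s$) is a separate wide-open problem: the only known device (Pikhurko's trick, used in Theorem~\ref{pik-impro}(b)) doubles the dimension, and your proposal does not improve on this. (iv) Uniqueness of the maximizer, including the exceptional case $d=3$, $k=1$, is not addressed beyond noting it must be handled. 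For comparison, the paper sidesteps all of (i)--(iv) by aiming only at a near-optimal bound: it derives the inequalities $\prod_{i=1}^t(\beta_i-\beta_{d+1})\le\sum_{j=t+1}^{d+1}\beta_j$ for the interior lattice point maximizing the minimal barycentric coordinate, and reduces the volume estimate to a finite family of univariate optimization problems whose minima are controlled by Sylvester numbers --- a route that provably gives the factor $d+1$ but, as the authors note (citing Balletti's database for $d=3$, $k=2$), cannot give the sharp constant, so even a fully executed version of your plan built on Theorem~\ref{thm:vol:simpl} could not establish the conjecture.
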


The conjecture claims that, in most cases, $S_{d,k}$ is an essentially unique volume maximizer in the family of all $d$-dimensional lattice polytopes with $k$ interior lattice points. It was recently verified for lattice simplices with $k=1$ \cite{MR3318147}. It is also known to hold in dimension $d=3$ for $k\le 2$ \cite{MR2760660,arXiv:1612.08918}. The previously best upper volume bounds were achieved by Pikhurko in 2001: \cite[Equation~(10)]{MR1996360} gives $s(d,k) \le k 2^{3d-2}15^{(d-1)2^{d+1}}/d!$. 
Improving upon his results, our main theorem confirms that the values $s(d,k)$ and $\vol(S_{d,k})$ are indeed very close to each other.

\begin{theorem}
	\label{thm:vol:simplices}
	Let $d, k$ be positive integers. Then
	\[
	s(d,k) \le  k (d+1) \frac{(s_d-1)^2}{d!}.
	\]
\end{theorem}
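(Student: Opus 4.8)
The plan is to fix a volume-maximizing simplex $S=\conv(v_0,\dots,v_d)\in\Sd{k}$ and bound its normalized volume $N:=d!\,\vol(S)$; the target is $N\le k(d+1)(s_d-1)^2$. I would argue by induction on the dimension, the underlying mechanism being to find a lattice direction along which $S$ ``unfolds'' onto a lower-dimensional lattice simplex carrying at most $k$ interior lattice points, together with a height (lattice width) factor. This is the strategy behind the earlier bounds of Hensley and Pikhurko; the entire issue is to lose as little as possible at each such step, since Pikhurko's argument inflates a constant at each of the $\approx d$ levels, which is precisely his $15^{(d-1)2^{d+1}}$ blow-up.

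For the induction step I would fix an interior lattice point $q\in\intr{S}\cap\Z^d$, translate so that $q=o$, and expand $o=\sum_{i=0}^{d}\lambda_i v_i$ in barycentric coordinates ($\lambda_i>0$, $\sum_i\lambda_i=1$). Three facts feed in: (a) $\lambda_i=\vol(S_i)/\vol(S)$, where $S_i:=\conv(v_0,\dots,v_{i-1},o,v_{i+1},\dots,v_d)$ is a lattice simplex, so each $N\lambda_i$ is a positive integer; (b) describing $S=\{x:\sprod{u_i}{x}\le h_i\}$ with $u_i\in\Z^d$ primitive and $h_i\in\Z$, one has $h_i\ge1$ and $\lambda_i=h_i/c_i$ with integers $1\le h_i\le c_i$, where $c_i\ge1$ is the lattice width of $S$ in direction $u_i$ and $N$ equals $c_i$ times the normalized volume of the facet $F_i$ opposite $v_i$; (c) $\intr{S_i}\subseteq\intr{S}$. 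Combining the identity $\sum_i h_i/c_i=1$ with the integrality of the $h_i,c_i$ yields a one-step inequality that, feeding into the induction, bounds $N$ by roughly $(s_d-1)^2$ up to slack: the worst case of this one-step inequality should be exactly the Sylvester recursion $x\mapsto x(x+1)$, with extremal barycentric vector $\bigl(1/s_1,\dots,1/s_{d-1},\tfrac{k}{(k+1)(s_d-1)},\tfrac{1}{(k+1)(s_d-1)}\bigr)$, the one realised by an interior lattice point of $S_{d,k}$. Proving the one-step inequality with this tight Sylvester base -- rather than a constant-inflated version of it -- while only accumulating slack linear in $d$, is the core of the argument and the step I expect to be the main obstacle.

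The hypothesis that $S$ has \emph{exactly} $k$ interior lattice points enters through a lattice-point count at the bottom of the recursion. Along the line through $q=o$ parallel to an edge $v_0v_d$, writing $v_d-v_0=g\,w$ with $w\in\Z^d$ primitive and $g\ge1$, the facet inequalities for $F_0$ and $F_d$ show that this line meets $S$ in the segment $\{\,t(v_d-v_0):-\lambda_d\le t\le\lambda_0\,\}$, which therefore contains at least $g(\lambda_0+\lambda_d)-1$ interior lattice points of $S$; hence $g(\lambda_0+\lambda_d)\le k+1$. Since $\lambda_0\ge1/(d+1)$ (the largest barycentric coordinate) and $N$ factors as $g$ times the normalized volume of the lattice simplex $\conv(v_0,\dots,v_{d-1},v_0+w)$, this contributes the factor $k$ together with a factor of order $d$; assembled with the recursive estimate it yields $N\le k(d+1)(s_d-1)^2$. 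Here the delicate point is to ensure that the slack from $\lambda_0\ge1/(d+1)$, from the floors in the point count, and from the recursion all collapse into the single clean factor $k(d+1)$, with no residual doubly-exponential loss.
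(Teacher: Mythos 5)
Your outline is a strategy sketch, not a proof, and the gap sits exactly where the theorem's content lies. The ``one-step inequality'' whose worst case you assert ``should be exactly the Sylvester recursion'', combined over $\approx d$ induction levels with only linear total slack, is never established -- you yourself flag it as the main obstacle. But this is precisely the quantitative heart of the result: the difference between Pikhurko's $15^{(d-1)2^{d+1}}$ bound and the bound $k(d+1)(s_d-1)^2/d!$ is entirely a question of whether each reduction step can be made tight against the Sylvester sequence without compounding constant losses, and nothing in the proposal indicates how to do this. A facet-by-facet or edge-direction induction is structurally prone to exactly the multiplicative blow-up you are trying to avoid, and there are also unsubstantiated claims in the setup: the lower-dimensional (or unimodularly squashed) simplex you recurse on is not shown to have at most $k$ interior lattice points (relative interior points of a facet are boundary points of $S$ and are not controlled by $k$; the squashed simplex $\conv(v_0,\dots,v_{d-1},v_0+w)$ is full-dimensional and its interior point count is likewise uncontrolled), so even the shape of the induction hypothesis is unclear. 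The correct ingredients you do identify (barycentric coordinates of an interior point, integrality of $N\lambda_i$, the lattice-point count $g(\lambda_0+\lambda_d)\le k+1$ along a line through $q$, $\lambda_0\ge 1/(d+1)$) do not by themselves produce the Sylvester-type bound.

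For comparison, the paper does not induct on dimension at all. It fixes the interior lattice point $x$ that maximizes the minimum barycentric coordinate and proves, via a determinant lemma (if the inequality failed, one could construct another interior lattice point with strictly larger minimum barycentric coordinate), the generalized product-sum inequalities $\prod_{i=1}^t(\beta_i-\beta_{d+1})\le\sum_{j=t+1}^{d+1}\beta_j$ for all $t\in[d]$. These inequalities are then fed into a single global optimization problem for $\beta_1\cdots\beta_d$, which is localized and reduced to univariate problems whose analysis yields $\beta_1\cdots\beta_d\ge\frac{1}{(d+1)(s_d-1)^2}$ for $d\ge 4$; combined with Pikhurko's bound $\vol(S)\le\frac{k}{d!\,\beta_1\cdots\beta_d}$ (where $k$ enters once, globally, rather than through a per-step lattice-point count), this gives the theorem, with small dimensions handled separately. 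If you want to salvage your route, you would need to formulate and prove the one-step inequality precisely and show that its losses telescope rather than multiply -- which is essentially what the product-sum inequality machinery accomplishes in a non-inductive way.
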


Theorem~\ref{thm:vol:simplices} implies $\vol(S_{d,k}) \le s(d,k) \le (d+1) \vol(S_{d,k})$. This determines $s(d,k)$ up to a linear factor in the dimension $d$. 

To prove Theorem~\ref{thm:vol:simplices}, we use the following result of Pikhurko. 
\begin{theorem}[{\cite[Lem.~5]{MR1996360}}]
	\label{thm:vol:simpl}
	Let $S$ be a $d$-dimensional simplex in $\R^d$, let $x$ be an interior lattice point of $S$ with barycentric coordinates $\beta_1 \ge \ldots \ge \beta_{d+1}$ and let $k= |\Z^d \cap \intr{S}|$. Then
	\begin{equation}
		\label{eq:vol:S:bary:better}
		\vol(S) \le \frac{k}{d! \, \beta_1 \cdots \beta_d}.
	\end{equation}
\end{theorem}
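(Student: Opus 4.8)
The plan is to prove the equivalent inequality $\card{\Z^d \cap \intr{S}} \ge d!\,\beta_1\cdots\beta_d\,\vol(S)$, the right‑hand side turning out to be the volume of an explicit half‑open parallelepiped inscribed in $\intr{S}$. Write $S = \conv(v_1,\dots,v_{d+1})$, let $\alpha_1,\dots,\alpha_{d+1}\colon\R^d\to\R$ be the affine barycentric coordinate functions of $S$ (so $\beta_i = \alpha_i(x)$), and put $u_i := v_i - v_{d+1}$ for $i = 1,\dots,d$; since $\dim S = d$ the $u_i$ form a basis of $\R^d$ and $x = v_{d+1} + \sum_{i=1}^{d}\beta_i u_i$. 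I would first introduce the half‑open parallelepiped
\[
\Pi := \Bigl\{\, v_{d+1} + \sum_{i=1}^{d} t_i u_i \ :\ 0 < t_i \le \beta_i \ \text{ for } i = 1,\dots,d \,\Bigr\},
\]
whose vertex with all $t_i = \beta_i$ equals $x$. A one‑line barycentric computation gives $\Pi \subseteq \intr{S}$: a point $p = v_{d+1} + \sum_i t_i u_i \in \Pi$ has $\alpha_i(p) = t_i \in (0,\beta_i]$ for $i \le d$, all positive, while $\alpha_{d+1}(p) = 1 - \sum_{i\le d} t_i \ge 1 - \sum_{i\le d}\beta_i = \beta_{d+1} > 0$. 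Moreover $\vol(\Pi) = \beta_1\cdots\beta_d\cdot|\det(u_1,\dots,u_d)| = d!\,\beta_1\cdots\beta_d\,\vol(S)$, which is exactly the quantity to be bounded by $k$. The ordering $\beta_1 \ge \cdots \ge \beta_{d+1}$ enters only here: anchoring $\Pi$ at the vertex carrying the \emph{smallest} barycentric coordinate makes $\vol(\Pi)$, and hence the bound, as large as possible.

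So it suffices to prove $\vol(\Pi) \le \card{\Z^d \cap \intr{S}} = k$. This cannot be done by counting $\Z^d \cap \Pi$ directly, because a half‑open parallelepiped of arbitrarily large volume may contain only a single lattice point. I would instead argue by a covering (``charging'') argument. Fix a bounded half‑open fundamental domain $F$ of $\Z^d$ of volume $1$ --- concretely the parallelepiped spanned by a basis of $\Z^d$ chosen reduced with respect to the functionals $\alpha_1,\dots,\alpha_{d+1}$, so that subtracting any element of $F$ changes each $\alpha_i$ by only a controlled amount --- and fix a region $X \subseteq \intr{S}$ with $\vol(X) \ge \vol(\Pi)$ that stays away from $\bd{S}$, for instance the shrunk copy $X := x + \lambda(\intr{S} - x)$ with $\lambda := (d!\,\beta_1\cdots\beta_d)^{1/d}$. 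Here $\lambda < 1$ by the arithmetic--geometric mean inequality applied to $\beta_1,\dots,\beta_d$ (whose sum is $<1$), so $X \subseteq \intr{S}$, $\vol(X) = \lambda^d\vol(S) = \vol(\Pi)$, and every $\alpha_i$ on $X$ exceeds $(1-\lambda)\beta_i$. If one shows that each $p \in X$ lies in $z + F$ for some $z \in \Z^d \cap \intr{S}$, then, the translates $z + F$ being pairwise disjoint of volume $1$, we get $\vol(\Pi) \le \vol(X) \le \card{\Z^d \cap \intr{S}} = k$, as desired.

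The main obstacle is precisely this last claim: one must choose the reduced basis of $\Z^d$ (hence $F$) and calibrate $\lambda$ so that the rounding map $X \to \Z^d$ sending $p$ to the unique lattice point of $p - F$ never leaves $\intr{S}$. Applying the affine map that takes $S$ to the standard simplex $\conv(o,e_1,\dots,e_d)$ turns $\Z^d$ into a coset of a lattice $\Lambda$ and reduces the claim to showing that if such a coset meets $\intr{\conv(o,e_1,\dots,e_d)}$ then it meets it in at least $\beta_1\cdots\beta_d/\det(\Lambda)$ points. This is a statement about how densely $\Lambda$ populates a standard simplex that contains one of its points, and I expect it to require a careful --- likely inductive --- selection of $F$ driven by the successive minima of $\Lambda$, together with the fact that $S$ has lattice width at least $1$ in every direction (automatic once $k \ge 1$). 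That quantitative lattice‑distribution input is where the real work lies.
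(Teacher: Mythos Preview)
The paper does not prove this theorem; it is quoted from Pikhurko and used as a black box. So there is no ``paper's proof'' to compare with. That said, your attempt has a genuine gap, and the missing step is not where you think it is.

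Your setup is exactly right: the half-open parallelepiped
\[
\Pi=\Bigl\{v_{d+1}+\textstyle\sum_{i=1}^d t_i u_i:\ 0<t_i\le\beta_i\Bigr\}\subseteq\intr S,\qquad \vol(\Pi)=d!\,\beta_1\cdots\beta_d\,\vol(S),
\]
reduces the theorem to $k\ge\vol(\Pi)$, and you correctly observe that counting $\Z^d\cap\Pi$ is hopeless. The gap is in the next move. Your proposed charging argument --- pick a reduced fundamental domain $F$ and show that the rounding map $X\to\Z^d$ never leaves $\intr S$ --- is not proved, and the machinery you invoke (successive minima, lattice width) is both heavy and aimed in the wrong direction. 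There is no reason a fundamental domain of $\Z^d$, however reduced, should have the property that rounding from a shrunk copy of $S$ stays inside $\intr S$; a single long thin simplex already defeats coordinate rounding, and passing to another basis does not obviously help.

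The argument you are missing is short and uses only Blichfeldt's principle. Since $\vol(\Pi)\le m$ is what we want with $m=k$, suppose for contradiction $\vol(\Pi)>k$. By Blichfeldt there exist $k+1$ points $p_1,\ldots,p_{k+1}\in\Pi$ pairwise congruent modulo $\Z^d$. Reorder so that $p_1$ has the smallest value of $\sum_{i\le d}t_i(p_j)$ (equivalently the largest $(d{+}1)$st barycentric coordinate). For each $j$ set
\[
q_j:=x+p_1-p_j\in\Z^d.
\]
Writing $p_\ell=v_{d+1}+\sum_i t_i^{(\ell)}u_i$ with $0<t_i^{(\ell)}\le\beta_i$, one has $\alpha_i(q_j)=\beta_i+t_i^{(1)}-t_i^{(j)}>0$ for $i\le d$ (because $t_i^{(j)}\le\beta_i$ and $t_i^{(1)}>0$), and $\alpha_{d+1}(q_j)=\beta_{d+1}+\sum_i t_i^{(j)}-\sum_i t_i^{(1)}\ge\beta_{d+1}>0$ by the choice of $p_1$. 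Hence $q_1,\ldots,q_{k+1}$ are $k+1$ distinct points of $\Z^d\cap\intr S$, contradicting $|\Z^d\cap\intr S|=k$. Thus $\vol(\Pi)\le k$, which is the desired bound.

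So the ``quantitative lattice-distribution input'' you anticipate is unnecessary: the whole content is Blichfeldt plus the single observation that, among a $\Z^d$-congruent family inside $\Pi$, translating by $x$ and anchoring at the member nearest $v_{d+1}$ keeps every resulting lattice point in $\intr S$. No lattice-simplex hypothesis on $S$ is needed.
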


In view of Theorem~\ref{thm:vol:simpl}, for an arbitrary $S \in \Sd{k}$, we need to provide a lower bound on $\beta_1 \cdots \beta_d$ for some interior lattice point $x$ of $S$. Following Pikhurko, we choose $x$ to be the interior lattice point that maximizes the minimum barycentric coordinate of $x$.  In Section~\ref{gen:product:sum:sect} we derive so-called generalized product-sum inequalities for the barycentric coordinates of a point $x$ chosen in this way. They are the key tools that allow to bound $s(d,k)$ by considering purely analytical optimization problems. These inequalities are derived by adapting ideas from \cite{MR2967480} from the case $k=1$ (where they led to sharp volume bounds, see \cite{MR3318147}) to the case of an arbitrary $k \ge 1$.  In Section~\ref{opt:problem:s}, we formulate an optimization problem that involves generalized product-sum inequalities as constraints and which can be used for deriving lower bounds on $\beta_1 \cdots \beta_d$. In Sections~\ref{localizing} and \ref{univariate} we study this optimization problem and eventually prove Theorem~\ref{thm:vol:simplices}. 

Along the way, we improve two other results by Pikhurko \cite[Theorem~2 and Equation~(9)]{MR1996360}.

\begin{theorem}\label{pik-impro} Let $d,k$ be positive integers.
\begin{enumerate}[(a)]
	\item \label{bary:bound} For every $S \in \Sd{k}$ there exists an interior lattice point of $S$ all of whose all barycentric coordinates are at least $\frac{1}{(d+1) (s_{d+1}-1)}$.
	\item \label{p:bound} $p(d,k) \le \left(d (2d+1) (s_{2d+1}-1)\right)^d \, k$.
\end{enumerate}
\end{theorem}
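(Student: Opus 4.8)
The plan is to reduce both statements to Theorem~\ref{thm:vol:simpl} together with the generalized product-sum inequalities announced for Section~\ref{gen:product:sum:sect}. Let me think about what those inequalities should give us and how to use them.

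For part~\eqref{bary:bound}: Given $S \in \Sd{k}$, choose the interior lattice point $x$ maximizing the minimum barycentric coordinate $\beta_{d+1} = \min_i \beta_i$, and order $\beta_1 \ge \cdots \ge \beta_{d+1}$. The claim is $\beta_{d+1} \ge \frac{1}{(d+1)(s_{d+1}-1)}$. The natural route is induction on the index via a product-sum inequality of the shape $\beta_1 \cdots \beta_j \cdot \bigl(\text{something}\bigr) \ge \beta_{d+1}$ or, more in the spirit of the $k=1$ case, an inequality forcing $\beta_{j+1} \ge$ (a function of $\beta_1,\dots,\beta_j$ and $k$). Recall that in the $k=1$ case the Sylvester-type bound comes from the recursion $\beta_{j+1} \ge \beta_1 \cdots \beta_j$ combined with $\sum \beta_i = 1$, which yields denominators $s_i$. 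For general $k$, I expect the product-sum inequality to read roughly $\beta_{j+1} \ge \tfrac{1}{k}\beta_1 \cdots \beta_j$ or $\beta_1 + \cdots + \beta_j + k\,\beta_1\cdots\beta_j \ge$ (a quantity $\ge 1-\text{tail}$); unwinding such a recursion and summing to $1$ should produce the factor $(s_{d+1}-1)$ with an extra $(d+1)$ accounting for the $k$-dependence and the fact that $x$ need not be the barycenter. The bookkeeping of how the $k$ enters the recursion without blowing up the Sylvester growth is where I'd be most careful; the cleanest formulation is probably to prove $\beta_i \ge \frac{1}{(d+1)}\cdot\frac{1}{s_i - 1}$-type bounds inductively, or to directly estimate $\beta_{d+1}$ from the worst case of the associated analytic optimization problem of Section~\ref{opt:problem:s}.

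For part~\eqref{p:bound}: this should be a reduction of the general lattice-polytope case to the simplex case. The standard move (Pikhurko's, ultimately going back to triangulation/Carathéodory-type arguments) is: take $P \in \Pd{k}$ with an interior lattice point $x$, triangulate $P$ into lattice simplices using $x$ as a common vertex; one of these simplices $S'$ has $\vol(S') \ge \vol(P)/(\text{number of facets})$, but more usefully one bounds $\vol(P)$ by considering, for each of the $d$ coordinate-type directions, a simplex containing $P$—or, alternatively, one applies part~\eqref{bary:bound} in dimension $d$ to a simplex spanned by $x$ and $d$ affinely independent vertices of $P$ and then controls $\vol(P)$ by a lattice-width / flatness type estimate. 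The appearance of $s_{2d+1}$ rather than $s_{d+1}$ strongly suggests the argument passes to dimension $2d$ (or $2d+1$): e.g. writing $P \subseteq \R^d$ and using that interior lattice points of $P$ relate to interior lattice points of a simplex in a doubled dimension, or applying the simplex bound with $k$ replaced by a count in an auxiliary $2d$-dimensional construction. Concretely I would: (i) pick an interior lattice point $x$ of $P$ and a simplex $S \subseteq P$ of dimension $d$ with vertex $x$; (ii) by part~\eqref{bary:bound} applied to $S$, get a barycentric lower bound, hence by Theorem~\ref{thm:vol:simpl} a bound on $\vol(S)$; (iii) bound $\vol(P)$ against $\vol(S)$ using that every vertex of $P$, together with $x$, lies in a lattice segment of bounded length because $x$ is interior — the "bounded length" being governed by the same Sylvester-type denominators but in the doubled dimension, producing $s_{2d+1}-1$.

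The main obstacle, as I see it, is part~\eqref{p:bound}: extracting the factor $\bigl(d(2d+1)(s_{2d+1}-1)\bigr)^d$ requires correctly identifying the auxiliary higher-dimensional object (or the iterated one-dimensional estimates) whose barycentric bound from part~\eqref{bary:bound} feeds in, and checking that the $d$ coordinate directions each contribute one factor, with the dimension doubling from $d$ to $2d$ (hence $s_{2d+1}$) coming from the need to bound both "how far a vertex of $P$ is from $x$" and "how far $x$ is from the facet of $S$ opposite that vertex" simultaneously. Part~\eqref{bary:bound} itself should be the more routine consequence once the generalized product-sum inequalities of Section~\ref{gen:product:sum:sect} are in hand, amounting to running the Sylvester recursion with the extra $k$- and $(d+1)$-slack and summing.
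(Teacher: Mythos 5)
Your write-up for both parts is a plan rather than a proof, and in both cases the key mechanism is missing or misguessed. For part~(\ref{bary:bound}), the generalized product-sum inequalities do not have the form you conjecture: they contain no $k$ at all. Theorem~\ref{thm:prod:sum} states that for the interior lattice point maximizing the minimal barycentric coordinate one has $\prod_{i=1}^{t}(\beta_i-\beta_{d+1})\le\sum_{j=t+1}^{d+1}\beta_j$ for all $t\in[d]$; a recursion of the shape $\beta_{j+1}\ge\frac{1}{k}\beta_1\cdots\beta_j$ would necessarily produce a $k$-dependent bound and so cannot yield the $k$-free constant $\frac{1}{(d+1)(s_{d+1}-1)}$. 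Moreover, even with the correct inequalities in hand, the constant is not obtained by simply ``running the Sylvester recursion and summing to $1$'': in the paper it comes out of the analysis of the optimization problem \eqref{opt:problem}, namely Lemma~\ref{lem:vol:opt:problem} together with Lemma~\ref{lem:one:is:strict}(c), which force $\beta_i=\frac{1}{s_i}+\beta_{d+1}$ for $i\le\ell-1$ for suitable $\ell$, after which $\PS(\ell)$ yields $\beta_{d+1}\ge\frac{1}{(d+1)(s_{\ell+1}-1)}\ge\frac{1}{(d+1)(s_{d+1}-1)}$ (Lemma~\ref{lem:reduction:univariate}(b)). You mention this route only as an aside and do not carry out either version, so part~(\ref{bary:bound}) is not established by your argument.

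For part~(\ref{p:bound}) the gap is more serious. You correctly sense that the appearance of $s_{2d+1}$ signals a passage to dimension $2d$, but you do not identify how, and your proposed substitutes (triangulating from an interior point, or comparing $\vol(P)$ with one inscribed simplex via segment-length or flatness estimates) do not lead to the stated bound. The actual argument rests on two specific ingredients: first, Pikhurko's reduction \rescite{MR1996360}{proof of Theorem~4}, which shows that every $d$-dimensional lattice polytope $P$ with interior lattice points contains an interior lattice point $w$ with $P-w\subseteq\sigma(w-P)$, where $\sigma=\frac{d}{\gamma}-1$ and $\gamma$ is a lower bound for the minimal barycentric coordinate available for lattice simplices of dimension \emph{up to} $2d$ --- this is exactly where the dimension doubling happens, and part~(\ref{bary:bound}) applied in dimension at most $2d$ gives $\gamma=\frac{1}{(2d+1)(s_{2d+1}-1)}$; second, the Lagarias--Ziegler estimate \rescite{MR1138580}{Thm.~2.5} that $P\in\Pd{k}$ together with $P-w\subseteq\sigma(w-P)$ implies $\vol(P)\le(1+\sigma)^d k=\left(d(2d+1)(s_{2d+1}-1)\right)^d k$. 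Neither the containment $P-w\subseteq\sigma(w-P)$ nor the volume bound $(1+\sigma)^d k$ appears in your sketch, and without them the factor $\left(d(2d+1)(s_{2d+1}-1)\right)^d$ cannot be extracted; you yourself flag this step as the main obstacle, and as written it remains unresolved.
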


The proof can be found at the end of Section~\ref{localizing}.
We expect that the order of the bound in (a) is sharp up to the reciprocal of a linear factor in the dimension.

Let us finish with two remarks. First, the authors wonder whether it is possible to use the present methods to replace in Theorem~\ref{thm:vol:simplices} the constant $d+1$ by a constant independent of the dimension $d$. The best such constant theoretically achievable would be $2$ (as one sees from $S_{d,1}$). Any further improvement would have to take the number $k$ of interior lattice points into account. However, it seems unclear how to introduce $k$ into the product-sum inequalities without significantly weakening the resulting bounds. Moreover, it was checked by Gabriele Balletti in the database of 3-simplices with $k=2$ \cite{arXiv:1612.08918} that (in contrast to the case $k=1$) it is impossible to use Theorem~\ref{thm:vol:simpl} to get a sharp volume bound -- hence, for sharpness a new approach would be needed. Secondly, the by far major challenge in this area of research is to replace $2d$ in Theorem~\ref{pik-impro}(b) by a linear function in $d$. This doubling of the dimension results from a clever trick by Pikhurko \cite[proof of Theorem~4]{MR1996360} to reduce the problem from one for polytopes to one for simplices. However, it is a wide open question how such a reduction could be done without increasing the dimension.

\paragraph{Basic notation and terminology.}

By $\N = \{1,2,3,\ldots\}$ we denote the set of (positive) natural numbers. For a non-negative integer $t$, let $[t]:=\{1,\ldots,t\}$ if $t > 0$ and $[t]:=\emptyset$ if $t=0$.  The cardinality of a set $X$ is denoted by $|X|$.  For $X \subseteq \R^d$, the dimension $\dim(X)$ of $X$ is defined as the dimension of the affine hull of $X$. The volume (that is, the Lebesgue measure) of a Lebesgue measurable set $X$ is denoted by $\vol(X)$. We use the standard scaling of $\vol$ with $\vol([0,1]^d) = 1$. 

The interior and the convex hull of $X \subseteq \R^d$ are denoted by $\intr{X}$ and $\conv(X)$, respectively. The standard basis vectors of $\R^d$ are denoted by $e_1,\ldots,e_d$. For $a, b \in \R^d$ we use $[a,b]$ to denote the convex hull of $\{a,b\}$. If $a \ne b,$ the set $[a,b]$ is the line segment joining $a$ and $b$. A polytope in $\R^d$ is the convex hull of a finite subset of $\R^d$. For more information on polytopes see \cite{MR1311028}. The set of all vertices of a polytope $P$ will be denoted by $\vertset{P}$. A polytope $S$ is called a simplex if the vertices of $S$ are affinely independent. 

If $S$ is a $d$-dimensional simplex in $\R^d$ with vertices $v_1,\ldots,v_{d+1}$, then every $x \in \R^d$ has a unique representation as the affine combination of $v_1,\ldots,v_{d+1}$, that is, there exist $\beta_1,\ldots,\beta_{d+1} \in \R$ uniquely determined by $S$ and $x$ such that 
\begin{align*}
x & = \beta_1 v_1 + \cdots + \beta_{d+1} v_{d+1},
\\ 1 & = \beta_1 + \cdots + \beta_{d+1}.
\end{align*}
The values $\beta_1,\ldots,\beta_{d+1}$ are called the \emph{barycentric coordinates} of $x$ with respect to $S$. Clearly, $x \in S$ if and only if all the barycentric coordinates are non-negative and $x \in \intr{S}$ if and only if all the barycentric coordinates are strictly positive.

\section{Generalized product-sum inequalities}

\label{gen:product:sum:sect}

In \cite{MR2967480} it was shown that, for every $S \in \Sd{1}$, the barycentric coordinates $\beta_1 \ge \ldots \ge \beta_{d+1}$ of the unique interior lattice point of $S$ satisfy the inequalities
\begin{align}
\label{eq:product:sum}
 \prod_{i = 1}^t \beta_i & \le \sum_{j =t+1}^{d+1} \beta_j & & \forall t \in [d].
\end{align}
We call \eqref{eq:product:sum} the \emph{product-sum inequalities}. In \cite{MR3318147}, $s(d,1)$ 
was determined using the product-sum inequality and Theorem~\ref{thm:vol:simpl}. 

For $\Sd{k}$ with an arbitrary $k \ge 1$ the situation is more difficult, because there is a freedom in the choice of $x \in \Z^d \cap \intr{S}$ and it is crucial to choose $x$ appropriately. Pikhurko \cite{MR1996360} suggested to choose a point $x \in \Z^d \cap \intr{S}$ with barycentric coordinates $\beta_1, \ldots. \beta_{d+1}$ maximizing $\gamma:=\min \{\beta_1,\ldots,\beta_{d+1}\}$. He used a lemma of Lagarias and Ziegler \cite[Lemma~2.1]{MR1138580} to get lower bounds on $\gamma$ in terms of $d$.

Our approach to generating inequalities for barycentric coordinates combines ideas from \cite{MR2967480} and \cite{MR1996360}. We use an interior lattice point maximizing the minimum barycentric coordinate as in \cite{MR1996360} and then, for this point, we derive generalized product-sum inequalities by adapting the arguments from \cite{MR2967480}. For a vector $y=(y_1,\ldots,y_n) \in \R^n$ ($n \in \N$), we introduce its maximum norm $\|y\|_\infty:= \max \{ |y_1|,\ldots,|y_n| \}$. 

\begin{lemma}[Determinant lemma; see \cite{MR2967480}]
	\label{determinant:lemma}
	Let $n \in \N$ and $A \in \R^{n \times n}$. If $0<|\det(A)|<1$, then there exists $y \in \Z^n \setminus \{o\}$ satisfying $\|A y \|_\infty<1$.
\end{lemma}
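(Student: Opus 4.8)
The plan is to deduce this from Minkowski's first theorem in the geometry of numbers. I would consider the open box $C := (-1,1)^n$ and its preimage $K := A^{-1}(C)$, which is an open, bounded, centrally symmetric convex set (a centered parallelepiped) because $\det(A) \neq 0$. By the change-of-variables formula, $\vol(K) = \vol(C)/|\det(A)| = 2^n/|\det(A)|$, so the hypothesis $|\det(A)| < 1$ gives $\vol(K) > 2^n$.

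Next I would apply Minkowski's convex body theorem in the form: a centrally symmetric convex set of volume strictly greater than $2^n$ contains a point of $\Z^n \setminus \{o\}$. Choosing such a $y \in (\Z^n \setminus \{o\}) \cap K$ and unwinding the definition of $K$ yields $Ay \in C = (-1,1)^n$, which is precisely $\|Ay\|_\infty < 1$, completing the argument.

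The only point that needs a little care is the \emph{strict} inequality $\|Ay\|_\infty < 1$: a direct application of Minkowski's theorem to the compact cube $[-1,1]^n$ would only deliver $\|Ay\|_\infty \le 1$. Working with the \emph{open} cube together with the version of Minkowski's theorem requiring merely $\vol(K) > 2^n$ (hence no compactness or boundary subtleties) avoids this. Alternatively, one can stay with $[-1,1]^n$: since $|\det(A)| < 1$ there is slack in the volume bound, so for sufficiently small $\eps > 0$ the body $(1-\eps)\,A^{-1}([-1,1]^n)$ still has volume $> 2^n$, and Minkowski's theorem then produces $y \in \Z^n \setminus \{o\}$ with $\|Ay\|_\infty \le 1-\eps < 1$. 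I do not expect any genuine obstacle here: the statement is a classical consequence of Minkowski's theorem (equivalently of Blichfeldt's lemma, or of a pigeonhole argument on the torus $\R^n/\Z^n$), with $0 < |\det(A)|$ used only to make $A$ invertible so that $K$ is a bounded body, and $|\det(A)| < 1$ used precisely to push the volume past the Minkowski threshold $2^n$.
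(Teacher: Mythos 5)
Your argument is correct, and it handles the only delicate point (strictness of $\|Ay\|_\infty < 1$) properly, since Minkowski's theorem in the strict-volume form $\vol(K) > 2^n$ needs no closedness of $K$. The paper itself gives no proof but cites \cite{MR2967480}, where the lemma is established by essentially this same Minkowski-type volume argument, so your route coincides with the standard one.
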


\begin{theorem}[Generalized product-sum inequalities]
	\label{thm:prod:sum}
	Let $S \subseteq \R^d$ be $d$-dimensional lattice simplex in $\R^d$ and let $x$ be an interior lattice point of $S$ with barycentric coordinates $\beta_1 \ge \ldots \ge \beta_{d+1}$ having the property that all barycentric coordinates of every other interior lattice point of $S$ are not smaller than $\beta_{d+1}$. Then, for every  $t \in [d]$,
	one has
	\begin{align}
	\label{eq:gen:prod:sum}
	\prod_{i =1}^t (\beta_i - \beta_{d+1}) \le \sum_{j=t+1}^{d+1} \beta_j.
	\end{align}
\end{theorem}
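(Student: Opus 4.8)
The plan is to fix $t \in [d]$ and argue by contradiction: assume that $\prod_{i=1}^t(\beta_i - \beta_{d+1}) > \sum_{j=t+1}^{d+1}\beta_j$, and produce a lattice point in the interior of $S$ whose minimum barycentric coordinate is strictly larger than $\beta_{d+1}$, contradicting the maximality assumption on $x$. To this end I would work in barycentric-type coordinates: writing $v_1,\ldots,v_{d+1}$ for the vertices of $S$, every point $z \in \R^d$ has coordinates $\lambda_1(z),\ldots,\lambda_{d+1}(z)$ summing to $1$, and the translated point $z-x$ has coordinates $\mu_i := \lambda_i(z) - \beta_i$ summing to $0$. The sublattice of $\Z^d$ consisting of lattice vectors, expressed in these affine coordinates, gives a lattice $\Lambda$ in the hyperplane $\{\mu : \sum_i \mu_i = 0\}$; since $S$ is a lattice simplex and $\vol(S) \ge 1/d!$, the relevant index/covolume data is controlled. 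The first key step is to set up the right linear map $A$ so that Lemma~\ref{determinant:lemma} applies.

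Concretely, following \cite{MR2967480}, I would consider the $t \times t$ matrix (or an $(n\times n)$ matrix after a change of lattice basis) that scales the first $t$ coordinate-differences $\mu_1,\ldots,\mu_t$ by $\frac{1}{\beta_i - \beta_{d+1}}$ respectively, composed with the inclusion of $\Lambda$. The determinant of this map, up to the covolume of $\Lambda$, is essentially $\bigl(\prod_{i=1}^t(\beta_i-\beta_{d+1})\bigr)^{-1}$ times a factor coming from how the remaining coordinates are absorbed; the standing assumption $\prod_{i=1}^t(\beta_i-\beta_{d+1}) > \sum_{j>t}\beta_j$ together with $\sum_{j>t}\beta_j \le 1$ (in fact one needs the sharper bookkeeping that the covolume of the relevant lattice is $\le \sum_{j>t}\beta_j$ or similar) should force $0 < |\det A| < 1$. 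Then Lemma~\ref{determinant:lemma} yields a nonzero lattice vector $y$, i.e. a nonzero lattice point $w := x + (\text{point of }\Z^d)$, with $\|Ay\|_\infty < 1$; unravelling the scaling, this says that the first $t$ coordinates of $w-x$ satisfy $|\mu_i| < \beta_i - \beta_{d+1}$, hence $\lambda_i(w) > \beta_{d+1}$ for $i \le t$.

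The remaining step is to handle the last $d+1-t$ barycentric coordinates of $w$ and to ensure $w \neq x$ is genuinely \emph{interior}. For the coordinates $\lambda_{d+1-?}$... more precisely for $j \in \{t+1,\ldots,d+1\}$, one uses that $\sum_{i\le t}\mu_i = -\sum_{j>t}\mu_j$ is small in absolute value (bounded by $\sum_{i\le t}(\beta_i-\beta_{d+1}) $, but better: by the $\ell^\infty$ bound it is $< \sum_{i \le t}(\beta_i - \beta_{d+1})$, and one compares this with $\sum_{j>t}\beta_j$), so that each $\lambda_j(w) \ge \beta_j - |\mu_j| > 0$ provided the construction of $A$ also encodes, in a suitable norm, that the perturbation on the tail is bounded by the available slack $\sum_{j>t}\beta_j - (\text{something})$. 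The honest way to do this — and the step I expect to be the main obstacle — is to choose $A$ so that its image norm controls \emph{simultaneously} the $t$ individual deviations $|\mu_i|/(\beta_i-\beta_{d+1})$ and the aggregate tail deviation; this is exactly the delicate rescaling in \cite{MR2967480} adapted from $\beta_i$ to $\beta_i - \beta_{d+1}$, and the subtraction of $\beta_{d+1}$ is what makes the argument work for general $k$ (the new lattice point $w$ must avoid being pushed \emph{below} $\beta_{d+1}$, not below $0$). Once such a $w \in \Z^d \cap \intr{S}$ with $\min_i \lambda_i(w) > \beta_{d+1}$ is produced, it contradicts the defining property of $x$, completing the proof; the degenerate sub-case $\prod_{i\le t}(\beta_i - \beta_{d+1}) = 0$ (i.e. $\beta_t = \beta_{d+1}$) makes the inequality trivial since the right-hand side is nonnegative, so it can be dispensed with at the outset.
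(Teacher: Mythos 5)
Your overall strategy (contradiction, the determinant lemma, and producing a lattice point whose minimum barycentric coordinate exceeds $\beta_{d+1}$, plus the trivial case $\beta_t=\beta_{d+1}$) matches the paper, but the concrete construction that makes this work is missing, and the setup you propose in its place does not go through. You look for a lattice point $w\in\Z^d$ close to $x$, i.e.\ you apply the determinant lemma to the lattice of barycentric \emph{deviations} $\mu_i=\lambda_i(w)-\beta_i$ of actual points of $\Z^d$. The covolume of that deviation lattice is $\tfrac{1}{d!\,\vol(S)}$ (in the coordinates you choose), so the determinant of any rescaling matrix $A$ built on it unavoidably involves $\vol(S)$ — the very quantity one is trying to bound from above, for which only the trivial lower bound $\vol(S)\ge 1/d!$ is available. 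With that, the contradiction hypothesis $\prod_{i\le t}(\beta_i-\beta_{d+1})>\sum_{j>t}\beta_j$ does not yield $0<|\det A|<1$, so the key hypothesis of Lemma~\ref{determinant:lemma} cannot be verified in your setup. You also explicitly flag, and leave unresolved, the second problem: after rescaling only the first $t$ deviations you have no control on the tail coordinates, and in fact your tail estimate only aims at $\lambda_j(w)>0$, whereas the contradiction requires $\lambda_j(w)>\beta_{d+1}$ for \emph{all} $j$.

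The paper's proof avoids both difficulties by not searching among lattice points near $x$ at all. The unknowns are integers $m_1,\ldots,m_t,m$ with $m=m_1+\cdots+m_t>0$; one sets $r=\sum_{i=1}^t \tfrac{m_i}{m}v_i$ in the affine hull of $v_1,\ldots,v_t$ and takes the lattice point $q=(m+1)x-mr=(m+1)x-\sum_i m_i v_i$. Its barycentric coordinates are $(m+1)\beta_i-m_i$ for $i\le t$ and $(m+1)\beta_j$ for $j>t$; the latter are automatically $\ge 2\beta_{d+1}>\beta_{d+1}$ since $m\ge 1$, which disposes of your ``main obstacle'' for free. The determinant lemma is then applied in $\Z^{t+1}$ (the coefficient space), to the $(t+1)\times(t+1)$ matrix encoding $\bigl|\tfrac{m_i-\beta_i m}{\beta_i-\beta_{d+1}}\bigr|<1$ together with $|m-\sum_i m_i|<1$; its determinant is exactly $\frac{\sum_{j>t+1-1}^{d+1}\beta_j}{\prod_{i\le t}(\beta_i-\beta_{d+1})}$, i.e.\ it depends only on the barycentric coordinates, so the negation of \eqref{eq:gen:prod:sum} gives $0<\det A<1$ directly, with no reference to $\vol(S)$. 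Without this (or an equivalent) construction your sketch has a genuine gap at its central step.
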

\begin{proof}
	If $\beta_i = \beta_{d+1}$ for some $i \in [t]$, the assertion is trivially fulfilled. So, we assume that $\beta_i > \beta_{d+1}$ holds for every $i \in [t]$. Let $v_1,\ldots,v_{d+1}$ be the vertices of $S$ such that $x = \beta_1 v_1 + \cdots + \beta_{d+1} v_{d+1}$. Throughout the proof, we consider integer variables $m,m_1,\ldots,m_t \in \Z$. If 
	\begin{align}
	\label{eq:proper:ms} 
	m & >0 & &\text{and} & m & =m_1+ \cdots + m_t, 
	\end{align}
	then 
	\[
		r = \sum_{i=1}^t \frac{m_i}{m} v_i
	\]
	is a point in the affine hull of the points $v_1,\ldots,v_t$ and 
	\[
	q = (m+1) x - m r
	\]
	is a lattice point on the line passing through $r$ and $x$. Our proof approach is to assume that \eqref{eq:gen:prod:sum} is not fulfilled. Under this assumption we derive a contradiction to the choice of $x$ by showing that for some $m,m_1,\ldots,m_t$ satisfying \eqref{eq:proper:ms}, the minimum barycentric coordinate of the point $q$ is strictly larger than $\beta_{d+1}$ (necessarily, $q$ is then an interior lattice point of $S$). 
	
	For this, let us note that whenever \eqref{eq:proper:ms} and 
	\begin{align}
	\label{eq:bar:coord:q}
	(m+1) \beta_i - m_i & > \beta_{d+1} &  \forall & i \in [t]
	\end{align}
	hold, the minimum barycentric coordinate of $q$ is strictly larger than $\beta_{d+1}$. Indeed, $(m+1) \beta_i -m_i$ with $i \in [t]$ are the barycentric coordinates of $q$ with respect to the vertices $v_1,\ldots,v_t$. They are strictly larger than $\beta_{d+1}$. Since $m\ge 1$, one has $(m+1) \beta_j \ge 2 \beta_{d+1} > \beta_{d+1}$, and so the remaining barycentric coordinates $(m+1) \beta_j$ with $j > t$ are also strictly larger than $\beta_{d+1}$. Summing up, if \eqref{eq:proper:ms} and \eqref{eq:bar:coord:q} hold, we get a contradiction. 
	
	We reformulate \eqref{eq:bar:coord:q} as $\frac{1}{\beta_i-\beta_{d+1}} ( m_i - \beta_i m) < 1$.  Since we want to use Lemma~\ref{determinant:lemma}, we will consider the stronger condition $|\frac{1}{\beta_i - \beta_{d+1}} (m_i - \beta_i m ) |< 1$ on the variables $m_1,\ldots,m_t,m$. Taking into account that   $m,m_1,\ldots,m_t$ are integer variables, the condition $m = m_1 +\cdots +m_t$ can be reformulated as the strict inequality $|-m_1 - m_2 - \cdots -m_t + m| < 1$. Altogether, we have introduced a system of $t+1$ strict inequalities for the $t+1$ integer variables  $m_1,\ldots,m_t,m$, which can be formulated as
	\begin{align}
	\label{eq:max:norm}
	\left\| 
	\underbrace{\begin{pmatrix} 
		\frac{1}{\beta_1-\beta_{d+1}} &  &  & -\frac{\beta_1}{\beta_1 - \beta_{d+1}}
		\\ & \ddots &  & \vdots
		\\ & & \frac{1}{\beta_t-\beta_{d+1}} & -\frac{\beta_t}{\beta_t - \beta_{d+1}}
		\\ -1 & \cdots & -1 & 1
		\end{pmatrix}}_{=:A}
	\begin{pmatrix}
	m_1 \\ \vdots \\ m_t \\ m
	\end{pmatrix}
	\right\|_\infty < 1.
	\end{align}
	Performing elementary row operations, we compute $\det(A)$: 
	\begin{align*}
	\det(A) & %
	= \frac{1}{\prod_{i=1}^t (\beta_i - \beta_{d+1})}  \begin{vmatrix} 
	\phantom{-}1 &  &  & -\beta_1
	\\ & \ddots &  & \vdots
	\\ & & \phantom{-}1 & - \beta_t
	\\ -1 & \cdots & -1 & 1
	\end{vmatrix}
	\\ & = 
	\frac{1}{\prod_{i=1}^t (\beta_i - \beta_{d+1})}  \begin{vmatrix} 
	\phantom{-}1 &  &  & -\beta_1
	\\ & \ddots &  & \vdots
	\\ & & \phantom{-}1 & - \beta_t
	\\ \phantom{-}0 & \cdots & \phantom{-}0 & 1- \sum_{i=1}^t \beta_i
	\end{vmatrix}
	\\ & = \frac{1 - \sum_{i=1}^t \beta_i}{ \prod_{i=1}^t (\beta_i -\beta_{d+1})} 
	= \frac{\sum_{j=t+1}^{d+1} \beta_j}{ \prod_{i=1}^t (\beta_i -\beta_{d+1})}.
	\end{align*}
	
	From this we see that $\det(A) > 0$ and, since we assumed that \eqref{eq:gen:prod:sum} is not fulfilled,  we also have $\det(A) < 1$. Thus, by Lemma~\ref{determinant:lemma} applied to \eqref{eq:max:norm}, there exists $(m_1,\ldots,m_t,m) \in \Z^{t+1} \setminus \{o\}$ satisfying \eqref{eq:max:norm}. The value $m$ cannot be zero, since otherwise \eqref{eq:max:norm} would imply that $m_1,\ldots,m_t$ are all zero, too. Since $m \ne 0$, possibly replacing $(m_1,\ldots,m_t,m)$ by $-(m_1,\ldots,m_t,m)$, we can assume $m>0$. We have found $m_1,\ldots,m_t,m$ satisfying \eqref{eq:proper:ms} and \eqref{eq:bar:coord:q}. This contradicts the choice of $x$, as described above. 
\end{proof}

\begin{remark}{\rm 
	Another generalization of the product-sum inequalities was obtained in the master thesis of Brunink \cite[Satz~4.1]{Brunink:MasterThesis:2016}. Brunink's inequalities are valid with respect to \emph{every} $x \in \Z^d \cap \intr{S}$ and so they are necessarily weaker than our inequalities, which are valid for $x$ that maximizes the minimum barycentric coordinate.}
\end{remark}

\section{An optimization problem for bounding $s(d,k)$}

\label{opt:problem:s}

We weaken \eqref{eq:gen:prod:sum} to the more convenient inequality
\begin{equation}
	\label{eq:PS}	
 \prod_{i=1}^t (\beta_i - \beta_{d+1}) \le \sum_{j=t+1}^{d+1} (\beta_j - \beta_{d+1}) + (d+1) \beta_{d+1}
\end{equation} and use this inequality to introduce the following optimization problem on a compact subset of $\R^{d+1}$:
\begin{align}
	\tau_d := \min \biggl\{ \beta_1 \cdots \beta_d \,:\; & \biggr. 
(\beta_1, \ldots, \beta_{d+1}) \in \R^{d+1}, \label{opt:problem}
	\\ & \beta_1 \ge \ldots \ge \beta_{d+1} \ge 0, \; \beta_1 + \cdots + \beta_d = 1, 
		\nonumber
	\\ & \biggl. \prod_{i=1}^t (\beta_i - \beta_{d+1}) \le \sum_{j=t+1}^{d+1} (\beta_j - \beta_{d+1}) + (d+1) \beta_{d+1} & &  \forall t \in [d] \biggr\}. \nonumber
\end{align}

During the analysis of this problem, we use the standard optimization terminology, such as \emph{feasible solution}, \emph{optimal solution} and \emph{optimal value}. Inequality $\beta_t \ge \beta_{t+1}$ for a given $t \in [d]$ will be denoted by $\ORD(t)$, where $\ORD$ stands for `ordering', while inequality \eqref{eq:PS} for a given $t \in [d]$ will be denoted by $\PS(t)$, where $\PS$ stands for `product-sum'. We will use $\ORD(0)$ to denote the inequality $1 \ge \beta_1$.

We start our analysis of \eqref{opt:problem} by observing some basic properties of feasible solutions. 

\begin{lemma}
	\label{lem:one:is:strict}
	Let $\beta=(\beta_1,\ldots,\beta_{d+1})$ be a feasible solution of the optimization problem \eqref{opt:problem}. Then the following hold:
	\begin{enumerate}[(a)]
		\item \label{all:positive} For every $t \in [d+1]$, $\beta_t > 0$.
		\item \label{ord:ps:strict} For every $t \in [d]$, $\ORD(t)$ or $\PS(t)$ is strict.
		\item \label{rel:sylv} If for some $\ell \in [d]$, $\PS(i)$ holds with equality for every $i \in [\ell]$, then $\beta_i = \frac{1}{s_i} + \beta_{d+1}$ for every $i \in [\ell]$. 
	\end{enumerate}
\end{lemma}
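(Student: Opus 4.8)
We outline the argument. The key device is the substitution $\gamma_i := \beta_i - \beta_{d+1}$ for $i \in [d+1]$, so that $\gamma_{d+1}=0$, $\gamma_1 \ge \gamma_2 \ge \cdots \ge \gamma_{d+1} = 0$, and $\PS(t)$ turns into
\[
\prod_{i=1}^{t}\gamma_i \;\le\; \sum_{j=t+1}^{d+1}\gamma_j + (d+1)\beta_{d+1} \qquad (t\in[d]).
\]
Write $P_t := \prod_{i=1}^{t}\gamma_i$ (so $P_0=1$) and $T_t := \sum_{j=t+1}^{d+1}\gamma_j + (d+1)\beta_{d+1}$, so that $\PS(t)$ reads $P_t\le T_t$, and note $P_t=\gamma_t P_{t-1}$ and $T_{t-1}-T_t=\gamma_t$. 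It is convenient to also allow $t=0$: by the linear equality constraint in the definition of $\tau_d$, a short computation shows that $\PS(0)$ (empty product on the left) is the trivial equality $1\le 1$.

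For part~\eqref{all:positive}, suppose some $\beta_t$ vanishes and let $\ell$ be the least index with $\beta_\ell=0$. Since $\beta_1$ is the largest coordinate and the $\beta_i$ have positive sum, $\beta_1>0$, so $\ell\ge 2$; the ordering then forces $\beta_\ell=\cdots=\beta_{d+1}=0$. Now the right-hand side $T_{\ell-1}$ of $\PS(\ell-1)$ equals $0$ (all $\gamma_j$ with $j\ge\ell$ vanish and $\beta_{d+1}=0$), whereas its left-hand side $P_{\ell-1}=\beta_1\cdots\beta_{\ell-1}$ is strictly positive by minimality of $\ell$ — a contradiction. Hence $\beta_{d+1}>0$, and by the ordering every $\beta_t>0$.

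For part~\eqref{ord:ps:strict}, assume that for some $t\in[d]$ both $\ORD(t)$ and $\PS(t)$ are equalities. Then $\gamma_t=\gamma_{t+1}$, and $\PS(t)$ gives $P_t=T_t=\gamma_{t+1}+\bigl(\sum_{j=t+2}^{d+1}\gamma_j+(d+1)\beta_{d+1}\bigr)>\gamma_{t+1}=\gamma_t$, where the strict inequality uses $\beta_{d+1}>0$ from part~\eqref{all:positive}. If $\gamma_t=0$ this reads $0>0$, absurd; otherwise dividing by $\gamma_t$ gives $P_{t-1}>1$, which is impossible, since $P_0=1$ and, for $t\ge 2$, each factor of $P_{t-1}$ lies in $(0,\gamma_1]$ with $\gamma_1=\beta_1-\beta_{d+1}\le 1-\beta_{d+1}<1$, forcing $P_{t-1}<1$.

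For part~\eqref{rel:sylv}, suppose $\PS(i)$ is an equality for every $i\in[\ell]$; recall that $\PS(0)$ is an equality as well. Subtracting the equality $\PS(i-1)$ from the equality $\PS(i)$ for $i\in[\ell]$ yields $P_i-P_{i-1}=T_i-T_{i-1}=-\gamma_i$, i.e.\ $(\gamma_i-1)P_{i-1}=-\gamma_i$, which solves to $\gamma_i=\tfrac{P_{i-1}}{1+P_{i-1}}$. Since $P_0=1$ we get $\gamma_1=\tfrac12=\tfrac{1}{s_1}$, and if $\gamma_j=\tfrac1{s_j}$ for all $j<i$ then $P_{i-1}=\prod_{j<i}\tfrac1{s_j}=\tfrac1{s_1\cdots s_{i-1}}=\tfrac1{s_i-1}$ by the Sylvester recursion, whence $\gamma_i=\tfrac{1/(s_i-1)}{1+1/(s_i-1)}=\tfrac1{s_i}$. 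By induction $\beta_i-\beta_{d+1}=\gamma_i=\tfrac1{s_i}$ for every $i\in[\ell]$, as claimed.

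The routine parts are \eqref{all:positive} and \eqref{rel:sylv}; the step that needs the most care is \eqref{ord:ps:strict}, because one must first invoke \eqref{all:positive} to know $\beta_{d+1}>0$ — this is exactly what upgrades $\PS(t)$ from "$\ge\gamma_{t+1}$" to "$>\gamma_{t+1}$" — and then exploit $\gamma_1<1$ to rule out $P_{t-1}>1$. The only genuinely delicate bookkeeping is arranging the constant $(d+1)\beta_{d+1}$ so that the telescoping of the $\PS$-equalities (including the convention $\PS(0):1\le 1$) is clean; everything else is elementary.
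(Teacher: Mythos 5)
Your proof is correct and takes essentially the same route as the paper: (a) and (b) rest on the same contradictions (a vanishing right-hand side of a product-sum inequality, and $\prod_{i\le t}(\beta_i-\beta_{d+1})\le\beta_t-\beta_{d+1}$ against a right-hand side strictly larger than $\beta_{t+1}-\beta_{d+1}$ because $\beta_{d+1}>0$). Part (c) is the same induction driven by the Sylvester recursion, merely organized by differencing the consecutive equalities $\PS(i-1)$ and $\PS(i)$ (your $\PS(0)$ convention uses the normalization $\beta_1+\cdots+\beta_{d+1}=1$, which is what the paper intends and uses) instead of substituting the normalization directly.
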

\begin{proof}
	\eqref{all:positive}: If (a) was false, we would have $\beta_1 \ge  \cdots \ge \beta_\ell > \beta_{\ell+1} = \cdots = \beta_{d+1} =0$ for some $\ell \in [d]$. With this choice of $\ell$, the right-hand side of $\PS(\ell)$ would be zero and the left-hand side would be strictly positive, which is a contradiction.
	
	\eqref{ord:ps:strict}: If $\ORD(t)$ holds with equality, then $\beta_t=\beta_{t+1}$ and so $\beta_t-\beta_{d+1} = \beta_{t+1} - \beta_{d+1}$. Then, in $\PS(t)$ the left hand side is at most $\beta_t - \beta_{d+1}$, while the right hand side is strictly larger than $\beta_{t+1} - \beta_{d+1} = \beta_t - \beta_{d+1}$ (as $\beta_{d+1} > 0$ by (a)). Thus, \eqref{eq:PS} is strict.
	
	\eqref{rel:sylv}: The equality $\beta_i = \frac{1}{s_i} + \beta_{d+1}$ can be reformulated as $\frac{1}{\beta_i - \beta_{d+1}} = s_i$. We can prove \eqref{rel:sylv} by induction on $i \in [\ell]$. For $i=1$, the equality in $\PS(1)$ gives $\beta_1 - \beta_{d+1}=(1-\beta_1)+\beta_{d+1}$, so $\frac{1}{\beta_1- \beta_{d+1}} = 2 = s_1$. Assume that $\ell \ge i > 1$ and $\frac{1}{\beta_j - \beta_{d+1}} = s_j$ for every $j < i$. Since $\PS(i)$ holds with equality we have 
	\[\prod_{j=1}^i (\beta_j - \beta_{d+1})  + \sum_{j=1}^i (\beta_j - \beta_{d+1}) = 1.\]
	Factoring out $\beta_i-\beta_{d+1}$ and using the induction hypothesis, we arrive at
	\[
	(\beta_i-\beta_{d+1}) \left( \frac{1}{s_1 \cdots s_{i-1}} + 1 \right) + \frac{1}{s_1} + \cdots + \frac{1}{s_{i-1}}  = 1.
	\]
	Using the well-known equality $\frac{1}{s_1} + \cdots + \frac{1}{s_{i-1}} + \frac{1}{s_1 \cdots s_{i-1}} = 1$, we get $\frac{1}{\beta_i - \beta_{d+1}} = s_1 \cdots s_{i-1} + 1 = s_i$, as desired.
\end{proof}

\begin{theorem}
	\label{thm:lower:bound:and:s}
	Let $d, k \in \N$. Then $\tau_d>0$ and
	\[
		s(d,k) \le \frac{k}{d! \, \tau_d}
	\]
\end{theorem}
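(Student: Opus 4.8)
The plan is to combine Theorem~\ref{thm:vol:simpl} with Theorem~\ref{thm:prod:sum}, routing everything through the optimization problem \eqref{opt:problem}. First I would fix $S \in \Sd{k}$ and let $x$ be an interior lattice point of $S$ whose minimum barycentric coordinate is as large as possible; label the barycentric coordinates $\beta_1 \ge \cdots \ge \beta_{d+1} > 0$. Then $x$ satisfies the hypothesis of Theorem~\ref{thm:prod:sum}, so the generalized product-sum inequalities \eqref{eq:gen:prod:sum} hold for every $t \in [d]$. Since $\beta_{d+1} \ge 0$, each right-hand side $\sum_{j=t+1}^{d+1}\beta_j$ is bounded above by $\sum_{j=t+1}^{d+1}(\beta_j - \beta_{d+1}) + (d+1)\beta_{d+1}$, so $(\beta_1,\ldots,\beta_{d+1})$ also satisfies the weakened inequalities $\PS(t)$ from \eqref{eq:PS}.

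The slight wrinkle is normalization: the barycentric coordinates satisfy $\beta_1 + \cdots + \beta_{d+1} = 1$, whereas the feasible region of \eqref{opt:problem} requires $\beta_1 + \cdots + \beta_d = 1$. I would handle this by rescaling. Set $\sigma := \beta_1 + \cdots + \beta_d = 1 - \beta_{d+1} \in (0,1]$ and put $\beta_i' := \beta_i/\sigma$ for $i \in [d+1]$. The ordering constraints $\ORD(t)$ and positivity are preserved, and $\beta_1' + \cdots + \beta_d' = 1$. For the $\PS(t)$ inequalities one checks they are preserved under this scaling: dividing \eqref{eq:PS} through by $\sigma$, the left-hand side becomes $\sigma^{t-1}\prod_{i=1}^t(\beta_i' - \beta_{d+1}')$, which since $\sigma \le 1$ and $t \ge 1$ is at least $\sigma^{-1}\cdot\sigma^{t}\prod(\beta_i'-\beta_{d+1}')$... so in fact scaling down only helps and the inequalities remain valid for $(\beta_1',\ldots,\beta_{d+1}')$. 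Hence this rescaled vector is a feasible solution of \eqref{opt:problem}, which gives $\beta_1' \cdots \beta_d' \ge \tau_d$, i.e. $\beta_1 \cdots \beta_d \ge \sigma^d \tau_d \ge \tau_d \cdot \sigma^d$. Combining with Theorem~\ref{thm:vol:simpl}, $\vol(S) \le \frac{k}{d!\,\beta_1\cdots\beta_d} \le \frac{k}{d!\,\sigma^d\tau_d}$, and since $\sigma \le 1$ we conclude $\vol(S) \le \frac{k}{d!\,\tau_d}$. Taking the supremum over $S \in \Sd{k}$ yields $s(d,k) \le \frac{k}{d!\,\tau_d}$.

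It remains to verify $\tau_d > 0$. The feasible region of \eqref{opt:problem} is a closed and bounded subset of $\R^{d+1}$ (closed because defined by non-strict inequalities and a linear equation; bounded because $0 \le \beta_i \le 1$ for all $i$ once $\beta_1 + \cdots + \beta_d = 1$ and the ordering holds), hence compact, and it is nonempty (e.g. the Sylvester-type point $\beta_i = 1/s_i$, $\beta_{d+1} = 0$, after checking it satisfies the constraints, via Lemma~\ref{lem:one:is:strict}\eqref{rel:sylv} and the identity $\sum 1/s_i + 1/(s_1\cdots s_d) = 1$). The objective $\beta \mapsto \beta_1 \cdots \beta_d$ is continuous, so the minimum $\tau_d$ is attained. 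By Lemma~\ref{lem:one:is:strict}\eqref{all:positive}, every feasible solution has all coordinates strictly positive, so the minimizing value is a product of strictly positive numbers, giving $\tau_d > 0$.

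The main obstacle I anticipate is getting the normalization bookkeeping exactly right — in particular confirming that passing from the $\beta_1 + \cdots + \beta_{d+1} = 1$ normalization to the $\beta_1 + \cdots + \beta_d = 1$ normalization of \eqref{opt:problem} does not break the $\PS(t)$ constraints and does not cost anything in the final bound. Everything else (compactness, nonemptiness, positivity of the minimizer) is routine once Lemma~\ref{lem:one:is:strict} is in hand.
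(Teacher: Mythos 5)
Your overall route is the paper's: pick $x$ maximizing the minimal barycentric coordinate, apply Theorem~\ref{thm:prod:sum}, weaken \eqref{eq:gen:prod:sum} to \eqref{eq:PS}, conclude that the vector of barycentric coordinates is feasible for \eqref{opt:problem}, and finish with Theorem~\ref{thm:vol:simpl} plus Lemma~\ref{lem:one:is:strict}\eqref{all:positive} (together with attainment of the minimum on the compact feasible set) for $\tau_d>0$. The genuine gap is the rescaling step you inserted to reconcile the normalizations, and it fails in two ways. First, feasibility of the rescaled point does not follow: dividing $\PS(t)$ by $\sigma$ gives $\sigma^{t-1}\prod_{i=1}^{t}(\beta_i'-\beta_{d+1}') \le \sum_{j=t+1}^{d+1}(\beta_j'-\beta_{d+1}')+(d+1)\beta_{d+1}'$, and since $\sigma\le 1$ and $t$ may be $\ge 2$, the factor $\sigma^{t-1}\le 1$ sits on the wrong side: under $\beta\mapsto\beta/\sigma$ the product side scales like $\sigma^{-t}$ while the right-hand side scales like $\sigma^{-1}$, so enlarging the point by $1/\sigma$ can only endanger the product--sum inequalities; your parenthetical ``scaling down only helps'' has the direction reversed. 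Second, even granting feasibility, you would only obtain $\beta_1\cdots\beta_d\ge\sigma^d\tau_d$, hence $\vol(S)\le k/(d!\,\sigma^d\tau_d)$, which for $\sigma\le 1$ is \emph{weaker} than $k/(d!\,\tau_d)$; the concluding ``since $\sigma\le1$'' again runs the inequality backwards, so the desired bound does not follow.

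The repair is that no rescaling is needed: the normalization in \eqref{opt:problem} is intended to be $\beta_1+\cdots+\beta_{d+1}=1$ (the displayed $\beta_1+\cdots+\beta_d=1$ is a slip in the source; the proof of Lemma~\ref{lem:one:is:strict}\eqref{rel:sylv}, where equality in $\PS(1)$ is rewritten as $\beta_1-\beta_{d+1}=(1-\beta_1)+\beta_{d+1}$, and the proof of Lemma~\ref{lem:reduction:univariate} both use the full sum being $1$). With that normalization the barycentric vector of $x$ is itself feasible --- exactly your first computation, since $\sum_{j=t+1}^{d+1}\beta_j=\sum_{j=t+1}^{d+1}(\beta_j-\beta_{d+1})+(d+1-t)\beta_{d+1}$ --- and this also settles nonemptiness, so the proof collapses to the paper's two lines. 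Incidentally, your proposed witness $\beta_i=1/s_i$, $\beta_{d+1}=0$ is not feasible: its coordinates sum to $1-\frac{1}{s_1\cdots s_d}\ne 1$, and by Lemma~\ref{lem:one:is:strict}\eqref{all:positive} no feasible point can have a vanishing coordinate (the uniform point $\beta_i=\frac{1}{d+1}$ would do). The remaining ingredients --- compactness, continuity of the objective, and positivity of the optimal value via Lemma~\ref{lem:one:is:strict}\eqref{all:positive} --- are correct and match the paper.
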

\begin{proof}
	By Lemma~\ref{lem:one:is:strict}\eqref{all:positive}, $\tau_d > 0$. The asserted bound on $s(d,k)$ follows by combining Theorems~\ref{thm:vol:simpl} and \ref{thm:prod:sum}. 
\end{proof}

\section{Localizing optimal solutions}

\label{localizing}

Our aim now is to provide a possibly exact description of optimal solutions of \eqref{opt:problem}, in order to determine $\tau_d$ as precisely as possible. 

\begin{lemma}
	\label{lem:vol:opt:problem}
	Let $\beta=(\beta_1,\ldots,\beta_{d+1})$ be an optimal solution of \eqref{opt:problem}. Then there exists $\ell \in \{0,\ldots,d\}$ such that:
	\begin{enumerate}[(a)]
		\item $\ORD(t)$ holds with equality for $\ell + 1 \le t \le d$,
		\item $\PS(t)$ holds with equality for $1 \le t \le \ell -1$.
	\end{enumerate}
\end{lemma}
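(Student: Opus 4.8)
The plan is to argue by a local perturbation (exchange) argument on the barycentric coordinates, exploiting the combinatorial structure of the active constraints at an optimal solution $\beta$. Recall that the feasible region is a compact subset of $\R^{d+1}$ cut out by the normalization $\beta_1+\dots+\beta_d=1$, the ordering inequalities $\ORD(0),\dots,\ORD(d)$ (where $\ORD(0)$ is $1\ge\beta_1$ and $\ORD(t)$ is $\beta_t\ge\beta_{t+1}$), the sign constraint $\beta_{d+1}\ge 0$, and the product-sum inequalities $\PS(1),\dots,\PS(d)$. We minimize $\beta_1\cdots\beta_d$. Our goal is to show there is an index $\ell\in\{0,\dots,d\}$ such that all of $\ORD(\ell+1),\dots,\ORD(d)$ are tight and all of $\PS(1),\dots,\PS(\ell-1)$ are tight; i.e. the ``top'' part of the coordinates is pinned by product-sum equalities and the ``bottom'' part collapses to a single common value.

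First I would set $\ell$ to be the largest index in $\{0,\dots,d\}$ such that $\ORD(t)$ is tight for all $t$ with $\ell+1\le t\le d$ — equivalently, $\beta_{\ell+1}=\beta_{\ell+2}=\dots=\beta_{d+1}$, with $\ell=d$ if even $\ORD(d)$ fails to be tight and $\ell=0$ if all of $\ORD(1),\dots,\ORD(d)$ are tight. With this choice, part (a) holds by definition, so the entire content is part (b): I must show $\PS(t)$ is tight for every $t\le\ell-1$. I would prove this by contradiction: suppose some $\PS(t_0)$ with $t_0\le\ell-1$ is \emph{strict}. Since $t_0\le\ell-1<\ell$, the coordinate $\beta_{t_0}$ is strictly larger than $\beta_{d+1}$ is not quite automatic, but from Lemma~\ref{lem:one:is:strict}\eqref{all:positive} we have $\beta_{d+1}>0$, and the maximality of $\ell$ together with $t_0<\ell$ gives room to move $\beta_{t_0}$ and $\beta_{t_0+1}$ (or $\beta_{t_0}$ against $\beta_{d+1}$) while staying feasible. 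The idea is to decrease $\beta_{t_0}$ slightly and increase another coordinate with a smaller value by the same amount, preserving the sum $\beta_1+\dots+\beta_d=1$; because the product $\beta_1\cdots\beta_d$ of fixed-sum positive numbers strictly decreases when we make two of the factors more unequal in the ``spreading'' direction — wait, one must be careful about the direction: decreasing the largest factor and increasing a smaller one makes the product \emph{larger}, so instead I would push in the direction that makes the multiset \emph{more spread out}, namely increase $\beta_{t_0}$ and decrease a smaller coordinate $\beta_j$ with $j>t_0$ (still keeping $\beta_j\ge\beta_{j+1}$ and $\beta_j\ge 0$), which strictly decreases $\beta_1\cdots\beta_d$, contradicting optimality — provided such a move stays feasible for small perturbations.

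The main obstacle is verifying that such an infinitesimal exchange keeps all constraints satisfied. The binding constraints to watch are: (i) the ordering constraints $\ORD(t_0-1)$, $\ORD(t_0)$, $\ORD(j-1)$, $\ORD(j)$ near the two moved coordinates; (ii) the product-sum constraints $\PS(t)$ for all $t$; and (iii) $\beta_{d+1}\ge 0$. For (ii) one uses that $\PS(t_0)$ is strict by assumption, and for $t<t_0$ increasing $\beta_{t_0}$ only increases the right-hand side (a sum that includes $\beta_{t_0}$) while leaving the left-hand side unchanged, so those stay slack; for $t\ge t_0$ the left-hand side (a product over indices $\le t$) increases, so some care — and the choice of which small coordinate $\beta_j$ to decrease — is needed, and here the natural choice is $j$ just below $t_0$ in value, i.e. decrease $\beta_{t_0+1}$ if $t_0+1\le d$ and this keeps $\PS(t_0+1),\dots$ controlled, or if all $\PS$ from $t_0$ up are already forcing rigidity, fall back to moving mass into $\beta_{d+1}$. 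The cleanest route, which I would follow, is: if $\beta_{t_0}>\beta_{t_0+1}$ (so $\ORD(t_0)$ is strict), transfer a small $\eps>0$ from $\beta_{t_0+1}$ (decreasing it) — no: we need to keep the sum of the first $d$ fixed, so transfer $\eps$ from $\beta_{j}$ to $\beta_{t_0}$ where $\beta_j$ is the \emph{smallest} of $\beta_1,\dots,\beta_d$ strictly exceeding $\beta_{d+1}$ or where $\beta_j$ can be lowered without violating its own ordering constraint; the strictness of $\PS(t_0)$ and a finite collection of other slack inequalities then guarantees feasibility for $\eps$ small, and the AM–GM-type convexity argument shows the objective strictly drops. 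Assembling these cases — distinguishing whether the slack $\ORD$-constraint separating the rigid top block from the movable middle is at index $t_0$ or between $t_0$ and $\ell$, and whether $\beta_{d+1}$ can absorb perturbation — completes the contradiction and hence the proof. I expect the bookkeeping of which constraint to perturb against, and checking $\PS(t)$ for $t\ge t_0$, to be the delicate part; everything else is routine.
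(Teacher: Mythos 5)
Your overall strategy---the same choice of $\ell$ as in the paper and a local exchange perturbation to contradict optimality when some $\PS(t_0)$ with $t_0\le\ell-1$ is slack---is the right one, and your final direction (increase the larger coordinate, decrease a smaller one, which spreads the factors and strictly lowers the product) is correct. But there is a genuine gap exactly at the point you defer as ``bookkeeping'': ties among $\beta_1,\dots,\beta_\ell$. If $\beta_{t_0-1}=\beta_{t_0}$, increasing $\beta_{t_0}$ immediately violates $\ORD(t_0-1)$; if the coordinate you decrease is followed by an equal one, you violate the ordering inequality below it; and you never fix which $j$ to decrease. The choice of $j$ matters for the $\PS$ constraints too: if $j>t_0+1$, then every $\PS(t)$ with $t_0<t<j$ has its left-hand side increased (it contains the factor $\beta_{t_0}-\beta_{d+1}$ but not $\beta_j-\beta_{d+1}$) and its right-hand side decreased by $\eps$, so it breaks whenever it is tight; the only safe generic choice is $j=t_0+1$ (or endpoints of tie blocks), and that is precisely where strict-ordering information is needed. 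The vague fallback of ``moving mass into $\beta_{d+1}$'' does not rescue this: $\beta_{d+1}$ enters both sides of every $\PS(t)$, so that perturbation requires a completely different (and harder) feasibility analysis.

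The missing ingredient is the first stage of the paper's argument: one first proves that an optimal solution has strict ordering above the tail, i.e.\ $\beta_1>\cdots>\beta_\ell>\beta_{\ell+1}=\cdots=\beta_{d+1}$. This is done by a separate perturbation: if $\beta_{a-1}>\beta_a=\cdots=\beta_b>\beta_{b+1}$ with $b\le\ell$, replace $\beta_a,\beta_b$ by $\beta_a+\eps,\beta_b-\eps$; since $\beta_a=\beta_b$, the objective strictly decreases, the inequalities $\PS(t)$ with $a\le t<b$ are strict by Lemma~\ref{lem:one:is:strict}(b) (because $\ORD(t)$ is tight there), those with $t<a$ are unchanged, and those with $t\ge b$ have their left-hand sides decreased---contradiction with optimality. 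Only after this does your main step go through cleanly: with all adjacent orderings strict up to index $\ell$, the exchange $\beta_{t_0}\mapsto\beta_{t_0}+\eps$, $\beta_{t_0+1}\mapsto\beta_{t_0+1}-\eps$ is feasible ($\PS(t_0)$ is slack by assumption, $\PS(t)$ for $t<t_0$ is unchanged, $\PS(t)$ for $t\ge t_0+1$ has a smaller left-hand side) and strictly decreases the objective. One could alternatively merge the two stages by exchanging the top of the tie block containing $\beta_{t_0}$ against the bottom of the tie block containing $\beta_{t_0+1}$, but either way an explicit treatment of ties together with Lemma~\ref{lem:one:is:strict}(b) is indispensable, and your write-up contains neither.
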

\begin{proof}
 We set $\beta_0 := 1$. Choose $\ell \in \{0,\ldots,d\}$ such that 
	\[
		\beta_\ell > \beta_{\ell+1} = \cdots = \beta_{d+1}.
	\]
With this choice of $\ell$, (a) clearly holds and it remains to show (b). 
	
We first show that $\ORD(t)$ is also strict for each $t \le \ell-1$. If the latter is not the case, then we can find indices $a$ and $b$ with $1 \le a < b \le \ell$ such that 
	\[
		\beta_{a-1} > \beta_a = \cdots = \beta_b > \beta_{b+1}
	\]
	We can perturb $\beta$ by changing $\beta_a$ to $\beta_a+\eps$ and $\beta_b$ to $\beta_b - \eps$ for a small $\eps>0$. This change makes the value of the objective function smaller because the product $\beta_a \beta_b$ in the objective function gets smaller in view of the inequality
	\begin{equation}
		\label{eq:product:gets:smaller}
		(\beta_a + \eps)(\beta_b - \eps) < \beta_a \beta_b.
	\end{equation}
	On the other hand, our perturbation keeps $\beta$ a feasible solution. All $\ORD(1),\ldots,\ORD(d)$ remain fulfilled.  Inequalities $\PS(t)$ with $t < a$ remain fulfilled because, both sides of these inequalities remain unchanged. Inequalities $\PS(t)$ with $a \le t < b$ remain valid if $\eps>0$ is small, because in view of Lemma~\ref{lem:one:is:strict}(b) they were all strict. Inequalities $\PS(t)$ with $t \ge b$ remain valid, because by \eqref{eq:product:gets:smaller} the left hand side of these inequalities gets smaller, while the right hand side remains unchanged. This is a contradiction to the choice of $\beta$, because we have verified that a small perturbation of $\beta$ yields a feasible solution with a strictly smaller value of the objective function. This proves
	\[
		\beta_1 > \cdots > \beta_{\ell+1} = \cdots = \beta_{d+1}. 
	\]
	To verify (b), we choose an arbitrary $t$ with $1 \le t \le \ell -1$ and show that $\PS(t)$ holds with equality. If $\PS(t)$ is strict, we can perturb $\beta$ by changing $\beta_t$ to $\beta_t + \eps$ and $\beta_{t+1}$ to $\beta_{t+1} - \eps$ for a small $\eps>0$. This perturbation makes the value of the objective function strictly smaller. Indeed, the objective function contains the product $\beta_t \beta_{t+1}$, which gets smaller in view of 
	\begin{equation}
		\label{eq:another:product:gets:smaller}
		(\beta_t + \eps) (\beta_{t+1} - \eps) < \beta_t \beta_{t+1}.
	\end{equation}
	
	Clearly, $\ORD(1),\ldots,\ORD(d)$ remain valid. $\PS(1),\ldots,\PS(t-1)$ remain valid because both sides of the inequality are unchanged. $\PS(t)$ remains valid because it was strict. $\PS(t+1),\ldots,\PS(d)$ remain valid because the left-hand side gets smaller in view of \eqref{eq:another:product:gets:smaller} and the right-hand side does not change. This implies that a small perturbation of $\beta$ is a feasible solution with a strictly smaller value of the objective function. This contradicts the choice of $\beta$ and shows that $\PS(t)$ holds with equality. 
\end{proof}

For $\ell \in [d]$ let us define the following auxiliary function:
		\begin{align*}
			f_\ell(\alpha)   :=& 
			 \left( \prod_{i=1}^{\ell-1} \left(\frac{1}{s_i} + \alpha \right) \right) \left(\frac{1}{s_{\ell}-1} - d \alpha \right) \alpha^{d-\ell} \quad \text{ for } \alpha \in \R.
		\end{align*}

\begin{lemma}
	\label{lem:reduction:univariate}
	Let $\beta = (\beta_1,\ldots,\beta_{d+1})$ be an optimal solution of \eqref{opt:problem}. Then there exists some $\ell \in [d]$ such that the following conditions hold: 
	\begin{enumerate}[(a)]
		\item \label{expr:objective} The value $\beta_1 \cdots \beta_d$ of the objective function can be expressed using $\beta_{d+1}$ as 
		\[
			\beta_1 \cdots \beta_d = f_\ell(\beta_{d+1}).
		\]
		\item \label{expr:range:smallest:bar} The value $\beta_{d+1}$ satisfies 
		\[
			\frac{1}{(d+1)(s_{\ell+1} -1)} \le \beta_{d+1} \le \frac{1}{(d+1)(s_\ell-1)}.
		\]
	\end{enumerate}
\end{lemma}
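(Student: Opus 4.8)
The plan is to feed the conclusions of Lemma~\ref{lem:vol:opt:problem} into the structure of an optimal solution and then eliminate all variables except $\beta_{d+1}$. By Lemma~\ref{lem:vol:opt:problem} there is an index $\ell\in\{0,\ldots,d\}$ for which $\ORD(t)$ is tight for $\ell+1\le t\le d$ and $\PS(t)$ is tight for $1\le t\le \ell-1$. First I would dispose of the degenerate case $\ell=0$: then $\beta_1=\cdots=\beta_{d+1}$, which together with $\beta_1+\cdots+\beta_d=1$ forces $\beta_i=\tfrac1d$ and $\beta_{d+1}=\tfrac1d$; but then $\PS(1)$ reads $0\le \sum_{j\ge 2}\beta_j$, which is fine, yet one checks this point is not optimal (its objective value $d^{-d}$ exceeds $f_1(\beta_{d+1})$ for the competitor forced by $\ell=1$), or more simply one re-indexes so that $\ell\ge 1$ may be assumed. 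So assume $\ell\in[d]$.

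Next I would extract the coordinates. From the tight $\ORD(t)$, $t\ge\ell+1$, we get $\beta_{\ell+1}=\cdots=\beta_{d+1}=:\alpha$. From Lemma~\ref{lem:one:is:strict}\eqref{rel:sylv} applied with the tight $\PS(1),\ldots,\PS(\ell-1)$, we get $\beta_i=\tfrac{1}{s_i}+\alpha$ for $i\in[\ell-1]$. The only coordinate not yet pinned down is $\beta_\ell$. For this I would use the affine constraint $\beta_1+\cdots+\beta_d=1$: summing gives $\sum_{i=1}^{\ell-1}(\tfrac1{s_i}+\alpha)+\beta_\ell+(d-\ell)\alpha=1$, and invoking the Sylvester identity $\sum_{i=1}^{\ell-1}\tfrac1{s_i}=1-\tfrac1{s_1\cdots s_{\ell-1}}=1-\tfrac1{s_\ell-1}$ this collapses to $\beta_\ell=\tfrac1{s_\ell-1}-(d-1)\alpha-(d-\ell)\alpha+\ldots$; carefully bookkeeping the count of $\alpha$'s (there are $(\ell-1)+(d-\ell)=d-1$ of them plus we must remember $\beta_\ell$ itself carries no $\alpha$-shift), one lands on $\beta_\ell=\tfrac1{s_\ell-1}-d\alpha+\alpha=\tfrac{1}{s_{\ell}-1}-(d-1)\alpha$ — I'll have to be careful here, but the upshot is that $\beta_\ell$ equals $\tfrac{1}{s_\ell-1}$ minus a positive multiple of $\alpha$ matching the factor $\bigl(\tfrac{1}{s_\ell-1}-d\alpha\bigr)$ appearing in $f_\ell$. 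Then the objective is
\[
\beta_1\cdots\beta_d=\Bigl(\prod_{i=1}^{\ell-1}\bigl(\tfrac1{s_i}+\alpha\bigr)\Bigr)\,\beta_\ell\,\alpha^{\,d-\ell}=f_\ell(\alpha)=f_\ell(\beta_{d+1}),
\]
which is part~\eqref{expr:objective}; the delicate point is confirming the coefficient of $\alpha$ in $\beta_\ell$ is exactly $d$ so that it matches the definition of $f_\ell$, and this is the step I expect to be the main bookkeeping obstacle.

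Finally, part~\eqref{expr:range:smallest:bar}: the upper bound $\beta_{d+1}\le\tfrac1{(d+1)(s_\ell-1)}$ should come from feasibility of $\PS(\ell)$ (the first non-tight product-sum inequality), or alternatively from $\beta_\ell\ge\beta_{d+1}=\alpha$ together with the formula $\beta_\ell=\tfrac1{s_\ell-1}-d\alpha$, which rearranges to $\alpha(d+1)\le\tfrac1{s_\ell-1}$. For the lower bound $\beta_{d+1}\ge\tfrac1{(d+1)(s_{\ell+1}-1)}$, I would argue that if $\alpha$ were smaller, the point would still be feasible but one could decrease $\ell$ — more precisely, I would show $f_\ell$ is not minimized in the interior of the stated range unless the bound holds, or invoke that at optimality the inequality $\ORD(\ell)$, i.e. $\beta_\ell\ge\beta_{\ell+1}$, and the next product-sum inequality $\PS(\ell)$ interact so that pushing $\alpha$ below $\tfrac1{(d+1)(s_{\ell+1}-1)}$ would contradict either optimality or the defining property of $\ell$ (one gets an admissible perturbation decreasing the objective, as in the proof of Lemma~\ref{lem:vol:opt:problem}). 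Making this last inequality rigorous — tying the lower endpoint to $s_{\ell+1}$ rather than $s_\ell$ — is the part that requires the most care, and I would handle it by a perturbation/continuity argument in $\alpha$ exploiting that $\PS(\ell)$ must hold at the optimum.
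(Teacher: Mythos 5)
Your overall route is the paper's: invoke Lemma~\ref{lem:vol:opt:problem}, express $\beta_i$ for $i\le \ell-1$ via Lemma~\ref{lem:one:is:strict}\eqref{rel:sylv}, use the tight $\ORD$-inequalities for the tail, solve for $\beta_\ell$ from the normalization, and read off the range of $\beta_{d+1}$ from the remaining constraints. However, the two points you yourself flag as delicate are exactly the ones that are left open, and as written they do not close. First, the bookkeeping for $\beta_\ell$: the relevant normalization is the barycentric one, $\beta_1+\cdots+\beta_{d+1}=1$ (the display in \eqref{opt:problem} writes the sum only up to $\beta_d$, but the constraint actually used -- e.g.\ in the proof of Lemma~\ref{lem:one:is:strict}\eqref{rel:sylv}, and forced by the barycentric origin of the problem -- runs over all $d+1$ coordinates; note also that the all-equal point then has coordinates $\tfrac1{d+1}$, not $\tfrac1d$). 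Summing only the first $d$ coordinates is what produces your coefficient $d-1$ and the mismatch with $f_\ell$. With the correct normalization there are $(\ell-1)+(d+1-\ell)=d$ copies of $\beta_{d+1}$ besides $\beta_\ell$, so $\beta_\ell=1-\sum_{i=1}^{\ell-1}\tfrac1{s_i}-d\beta_{d+1}=\tfrac1{s_\ell-1}-d\beta_{d+1}$, exactly matching the factor in $f_\ell$.

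Second, part~\eqref{expr:range:smallest:bar} needs no perturbation, continuity or optimality argument, and your sketch of one (``push $\alpha$ down and decrease $\ell$'') is not a proof; both endpoints follow from feasibility alone once the coordinates are written in terms of $\beta_{d+1}$. The upper bound is $\ORD(\ell)$, i.e.\ $\tfrac1{s_\ell-1}-d\beta_{d+1}\ge\beta_{d+1}$, as in your second alternative (your first alternative is backwards: $\PS(\ell)$ does not give the upper bound). The lower bound is $\PS(\ell)$ itself: substituting $\beta_i-\beta_{d+1}=\tfrac1{s_i}$ for $i<\ell$ and $\beta_j=\beta_{d+1}$ for $j>\ell$, it reads $\tfrac1{s_\ell-1}\bigl(\tfrac1{s_\ell-1}-(d+1)\beta_{d+1}\bigr)\le(d+1)\beta_{d+1}$, which via $s_{\ell+1}-1=s_\ell(s_\ell-1)$ is precisely $\beta_{d+1}\ge\tfrac1{(d+1)(s_{\ell+1}-1)}$. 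Finally, in the all-equal case your claim that this point ``is not optimal'' is unjustified (and false for small $d$, e.g.\ $d=1,2$); what is needed -- and what works, as in your simpler alternative -- is that the all-equal point already satisfies (a) and (b) with $\ell=1$, since $f_1(\alpha)=(1-d\alpha)\alpha^{d-1}$ gives $f_1\bigl(\tfrac1{d+1}\bigr)=(d+1)^{-d}=\beta_1\cdots\beta_d$ and $\tfrac1{2(d+1)}\le\tfrac1{d+1}\le\tfrac1{(d+1)(s_1-1)}$.
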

\begin{proof}
	If $\beta_1 =  \cdots = \beta_{d+1}$, then $\beta_1 = \cdots = \beta_{d+1} = \frac{1}{d+1}$ and the assertions hold with $\ell=1$ (note that $f_1(\alpha) = (1 - d \alpha) \alpha^{d-1}$). Otherwise, we choose $\ell \in \{0,\ldots,d\}$ as in Lemma~\ref{lem:vol:opt:problem}. Since $\beta_1,\ldots,\beta_{d+1}$ are not all equal, we have $\ell > 0$. Using Lemma~\ref{lem:vol:opt:problem} and Lemma~\ref{lem:one:is:strict}(c), we arrive at the equalities
	\begin{align*}
		\beta_i & = \frac{1}{s_i} + \beta_{d+1} & & \text{for}  \ i \le \ell-1,
		\\ \beta_i & = \beta_{d+1} & & \text{for} \ i \ge \ell + 1.
	\end{align*}
	This expresses all $\beta_i$ with $i \in [d+1] \setminus \{\ell\}$ in terms of $\beta_{d+1}$. From $\beta_1 + \cdots + \beta_{d+1} = 1$, we also obtain a representation of $\beta_\ell$ using $\beta_{d+1}$: 
	\[
		\beta_\ell = 1 - \sum_{i=1}^{\ell -1} \left( \frac{1}{s_i} + \beta_{d+1} \right) - (d + 1 - \ell ) \beta_{d+1} = 1 - \sum_{i=1}^{\ell-1} \frac{1}{s_i} - d \beta_{d+1} = \frac{1}{s_\ell - 1} - d \beta_{d+1}.
	\]
	This yields \eqref{expr:objective}.	For verifying \eqref{expr:range:smallest:bar}, it suffices to observe that in view of the above representations of $\beta_1,\ldots,\beta_d$ in terms of $\beta_{d+1}$, inequality $\ORD(\ell)$ and $\PS(\ell)$ amount after some straightforward computation to $\beta_{d+1} \le \frac{1}{(d+1) (s_\ell-1)}$ and $\beta_{d+1} \ge \frac{1}{(d+1)(s_{\ell+1}-1)}$, respectively.
\end{proof}

\begin{proof}[Proof of Theorem~\ref{pik-impro}]
Assertion~\eqref{bary:bound} follows from Theorem~\ref{thm:prod:sum} and Lemma~\ref{lem:reduction:univariate}(b). 

We prove \eqref{p:bound}. Let $\gamma> 0$ be such that every lattice simplex of dimension at most $2d$ having interior lattice points contains an interior lattice point all of whose barycentric coordinates are at least $\gamma$. In the proof of Theorem~4 of \cite{MR1996360}, Pikhurko shows that  every $d$-dimensional lattice polytope $P$ having interior lattice points contains an interior lattice point $w$ with $P - w \subseteq \sigma (w-P)$, where $\sigma := \frac{d}{\gamma} - 1$. By \eqref{bary:bound}, we can fix $\gamma := \frac{1}{(2 d + 1) (s_{2d+1} - 1)}$. Lagarias and Ziegler \cite[Thm.~2.5]{MR1138580} observe that, if $P \in \Pd{k}$, then $\vol(P) \le (1+\sigma)^d k$ (see also \cite[Eq.~(8)]{MR1996360}). This yields the desired estimate.  
\end{proof}

\section{Univariate optimization problems and the conclusion}

\label{univariate}

By Lemma~\ref{lem:reduction:univariate}, 
\[
	\tau_d \ge \min \setcond{f_\ell^\ast}{\ell \in [d]},
\] 
where
\[
	f^\ast_\ell:=\min \setcond{f_\ell(\alpha)}{\frac{1}{(d+1)(s_{\ell+1} -1)} \le \alpha \le \frac{1}{(d+1)(s_\ell-1)}}.
\]
Thus, we have relaxed optimization problem \eqref{opt:problem} with $d+1$ variables to $d$ univariate optimization problems. We will determine a common lower bound on the optimal values $f^\ast_1,\ldots,f^\ast_d$.

\begin{lemma}
	\label{lem:bound:univariate}
	Let $d \ge 4$. Then 
	\[
		\tau_d \ge \frac{1}{(d+1) (s_d-1)^2}.
	\]

\end{lemma}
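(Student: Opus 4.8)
The plan is to bound each of the univariate minima $f^\ast_\ell$ for $\ell \in [d]$ from below by $\tfrac{1}{(d+1)(s_d-1)^2}$ and then invoke the relaxation $\tau_d \ge \min\{f^\ast_\ell : \ell \in [d]\}$ already established from Lemma~\ref{lem:reduction:univariate}. First I would treat the boundary-type cases separately. For $\ell = d$, the domain of $\alpha$ shrinks (since $s_{d+1}-1 = (s_d-1)s_d$ and $s_d - 1$ differ by a factor roughly $s_d$), so $f^\ast_d$ should be easy to estimate directly from the explicit product formula for $f_d$. The genuinely generic range is $\ell \in \{1,\ldots,d-1\}$, where I expect the minimum of $f_\ell$ on the interval $\left[\tfrac{1}{(d+1)(s_{\ell+1}-1)}, \tfrac{1}{(d+1)(s_\ell-1)}\right]$ to be attained at one of the two endpoints, because $f_\ell$ is (up to the sign change from the factor $\tfrac{1}{s_\ell-1} - d\alpha$) a product of affine functions in $\alpha$ and $\alpha^{d-\ell}$, so it is either monotone or unimodal on the relevant interval. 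I would make this precise by examining $\log f_\ell$ and its derivative, or simply by a direct convexity/log-concavity argument on each factor.

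The key step is therefore: evaluate $f_\ell$ at the two endpoints. At the left endpoint $\alpha = \tfrac{1}{(d+1)(s_{\ell+1}-1)}$ one has $s_{\ell+1} - 1 = (s_\ell-1)s_\ell$, and the factor $\tfrac{1}{s_\ell - 1} - d\alpha$ becomes $\tfrac{1}{s_\ell-1}\bigl(1 - \tfrac{d}{(d+1)s_\ell}\bigr)$, which is comfortably bounded below; at the right endpoint $\alpha = \tfrac{1}{(d+1)(s_\ell-1)}$ the factor $\beta_\ell = \tfrac{1}{s_\ell-1} - d\alpha$ becomes $\tfrac{1}{(d+1)(s_\ell-1)} = \alpha$, so $\beta_\ell = \beta_{d+1}$ and the solution degenerates, with $f_\ell$ at that point equal to $\left(\prod_{i=1}^{\ell-1}(\tfrac{1}{s_i}+\alpha)\right)\alpha^{d-\ell+1}$. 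In both cases I would use the elementary lower bounds $\tfrac{1}{s_i} + \alpha > \tfrac{1}{s_i}$ together with the Sylvester identity $\prod_{i=1}^{\ell-1} s_i = s_\ell - 1$ to turn the product $\prod_{i=1}^{\ell-1}(\tfrac{1}{s_i}+\alpha)$ into something at least $\tfrac{1}{s_\ell-1}$, and bound the powers of $\alpha$ from below using $\alpha \ge \tfrac{1}{(d+1)(s_d-1)}$, which holds throughout all the intervals since $s_{\ell+1} - 1 \le s_d - 1$ for $\ell \le d-1$. Assembling these estimates should yield a bound of the shape $\tfrac{c}{(d+1)(s_d-1)^2}$ with $c \ge 1$ once $d \ge 4$; the hypothesis $d \ge 4$ is presumably exactly what is needed to absorb lower-order factors like $\tfrac{d}{(d+1)}$ and the gap between $s_d - 1$ and $s_\ell - 1$ raised to small powers.

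I would organize the write-up as follows: (1) reduce to showing $f^\ast_\ell \ge \tfrac{1}{(d+1)(s_d-1)^2}$ for each $\ell$; (2) show $f_\ell$ restricted to its interval attains its minimum at an endpoint (unimodality via log-concavity of each affine factor and of $\alpha^{d-\ell}$, or a short derivative sign analysis); (3) bound $f_\ell$ at the right endpoint; (4) bound $f_\ell$ at the left endpoint; (5) handle $\ell = d$ (where the left endpoint bound needs the tighter interval) and, if necessary, small $\ell$ where $\ell - 1 = 0$ makes the product empty; (6) collect constants and check the inequality holds for $d \ge 4$.

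The main obstacle I anticipate is step (2): verifying that the minimum is at an endpoint rather than in the interior. The function $f_\ell$ is a product of $\ell - 1$ increasing affine factors, one decreasing affine factor $\tfrac{1}{s_\ell-1} - d\alpha$, and a monomial $\alpha^{d-\ell}$; none of these is individually problematic, but their product need not be convex or concave. The clean way around this is to note that $\log f_\ell$ is a sum of concave functions (each $\log$ of an affine function is concave, and $(d-\ell)\log\alpha$ is concave), hence $\log f_\ell$ is concave on the interior of the interval where all factors are positive, so $f_\ell$ is log-concave there and its minimum over a compact subinterval is attained at an endpoint. This single observation disposes of the difficulty uniformly in $\ell$, and the remaining work is the routine endpoint arithmetic sketched above.
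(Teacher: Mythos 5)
Your overall strategy---reduce to showing $f_\ell^\ast \ge \tfrac{1}{(d+1)(s_d-1)^2}$ for each $\ell\in[d]$, prove that the minimum of $f_\ell$ over its interval is attained at an endpoint, and then estimate the endpoint values---is viable, and your justification of endpoint attainment is correct and in fact cleaner than the paper's: on the interval all factors of $f_\ell$ are positive (the decreasing factor equals $\tfrac{1}{(d+1)(s_\ell-1)}>0$ at the right endpoint), so $\log f_\ell$ is a sum of logarithms of positive affine functions plus $(d-\ell)\log\alpha$, hence concave, and a log-concave function attains its minimum over a compact interval at an endpoint. The paper instead argues unimodality via real-rootedness of $f_\ell$ and Rolle's theorem, and uses it only to settle $d=4$ numerically; for $d\ge 5$ it never evaluates $f_\ell$ at endpoints but bounds each monotone factor by its worst value over the whole interval ($\tfrac1{s_i}+\alpha\ge\tfrac1{s_i}$, $\tfrac{1}{s_\ell-1}-d\alpha\ge\tfrac{1}{(d+1)(s_\ell-1)}$, $\alpha\ge\tfrac{1}{(d+1)(s_{\ell+1}-1)}$), obtaining $f_\ell^\ast\ge 1/y_\ell$ with $y_\ell=(s_\ell-1)^2(d+1)^{d-\ell+1}(s_{\ell+1}-1)^{d-\ell}$, then proves $y_\ell\le y_d$ for $\ell\le d-2$ via the Sylvester recursion and gives $\ell=d-1$ a separate, sharper quadratic treatment because $y_{d-1}>y_d$. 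So your route is genuinely different at the estimation step, and if carried out with the actual endpoint values it can even absorb $d=4$ and $\ell=d-1$ uniformly.

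The gap is that the quantitative heart of the argument is only asserted (``assembling these estimates should yield \dots $c\ge 1$''), and two points of your sketch would fail as written. First, you propose to bound the powers of $\alpha$ from below by the global bound $\alpha\ge\tfrac{1}{(d+1)(s_d-1)}$; this throws away exactly the doubly-exponential gap between $s_{\ell+1}-1$ and $s_d-1$, and the resulting estimate falls far below $\tfrac{1}{(d+1)(s_d-1)^2}$ as soon as $d-\ell\ge 2$ or so (already for $\ell=1$ at the right endpoint it would require $(d+1)^{d-1}(s_d-1)^{d-2}\le 1$). You must substitute the actual endpoint values $\tfrac{1}{(d+1)(s_{\ell+1}-1)}$ and $\tfrac{1}{(d+1)(s_\ell-1)}$; then the right-endpoint check becomes $(s_d-1)^2\ge (d+1)^{d-\ell}(s_\ell-1)^{d-\ell+2}$, which does hold for $d\ge 4$ but needs a proof (e.g.\ via $s_{\ell+1}-1=s_\ell(s_\ell-1)$ and $s_\ell^{\,d-\ell}\ge d+1$). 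Second, your expectation of ``comfortable'' slack is wrong precisely at the indices where the tension lies: for $\ell=d$ the right-endpoint value, after the bound $\tfrac1{s_i}+\alpha\ge\tfrac1{s_i}$, is exactly $\tfrac{1}{(d+1)(s_d-1)^2}$ (no slack at all), and for $\ell=d-1$ the left-endpoint inequality reduces, after the same bound, to $1-\tfrac{d}{(d+1)s_{d-1}}\ge 1-\tfrac{1}{s_{d-1}}$, i.e.\ to $\tfrac{d}{d+1}\le 1$; any lossy simplification there (such as replacing the bracket by $\tfrac12$, as ``comfortably bounded below'' suggests) destroys the bound. This is exactly the case the paper is forced to handle separately. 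So the plan can be completed, but the decisive endpoint inequalities---including the tight cases $\ell\in\{d-1,d\}$---are missing from the proposal and are not the routine arithmetic it presents them as.
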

\begin{proof}
	By Lemma~\ref{lem:reduction:univariate}, it suffices to verify
	\[
	f^\ast_\ell \ge \frac{1}{(d+1) (s_d-1)^2}
	\]
	for every $\ell \in [d]$. 
	
	Each $f_\ell(\alpha)$ with $\ell \in [d]$ is a polynomial of degree $d$ in $\alpha$, whose all roots are real and
	exactly one root is positive. Applying Rolle's theorem it is
	straightforward to see that all roots of the derivative of
	$f_\ell(\alpha)$ are real too and that the derivative has at most one
	positive root. Taking into account the asymptotics of $f_\ell(\alpha)$ at
	infinity, the latter observations show that $f_\ell(\alpha)$ is unimodal on the segment
	$\left[\frac{1}{(d+1)(s_{d+1}-1)},\frac{1}{(d+1)(s_d-1)}\right]$ and so it attains
	its minimum over this segment at one of its endpoints. This allows us
	to compute $f^\ast_\ell$ for concrete values of $d$. It is thus
	straightforward to check that our assertion is true for $d=4$. Assume $d \ge 5$. 

	For $\frac{1}{(d+1)(s_{\ell+1} -1)} \le \alpha \le \frac{1}{(d+1)(s_\ell-1)}$, one has 
	\begin{align*}
	f_\ell(\alpha) & 
	\ge \frac{1}{s_1 \cdots s_{\ell-1}} \left( \frac{1}{s_\ell-1} -d \alpha \right) \alpha^{d-\ell} 
	\\ & = \frac{1}{s_{\ell}-1} \left( \frac{1}{s_\ell-1} -d \alpha \right) \alpha^{d-\ell}
	\\ & \ge \frac{1}{s_{\ell}-1} \left( \frac{1}{s_\ell-1} - \frac{d}{(d+1) (s_\ell-1)} \right) \left( \frac{1}{(d+1)(s_{\ell+1} -1)}\right)^{d-\ell}
	\\ & =  \frac{1}{(s_{\ell}-1)(d+1)(s_\ell - 1)}  \left( \frac{1}{(d+1)(s_{\ell+1} -1)}\right)^{d-\ell}.
	\end{align*}
	We have thus derived the inequality $f_\ell^\ast \ge \frac{1}{y_\ell}$, 	where
	\begin{equation*}
		y_{\ell}:=(s_\ell-1)^2 (d+1)^{d-\ell+1} (s_{\ell+1} - 1)^{d-\ell}.
	\end{equation*}
Note that
\[\frac{1}{y_d} = \frac{1}{(d+1) (s_d-1)^2}\]
	Hence, we have to show $f_\ell^\ast \ge \frac{1}{y_d}$ for $\ell \in [d-1]$. 
	
	We first verify  $y_1 \le \cdots \le y_{d-2}$.	That is, for $\ell \in [d-3]$, we need to check $y_\ell \le y_{\ell+1}$. One can check using the relation $s_{i+1} - 1 = s_i (s_i-1)$ valid for all $i \ge 1$ that the latter inequality is equivalent to 
	\begin{align*}
		(s_\ell-1) (d+1) & \le s_{\ell+1}^{d-\ell-1} s_\ell 
	\end{align*}
	We will verify the slightly stronger inequality
	\begin{equation}
		 d+ 1 \le s_{\ell+1}^{d- \ell -1} \label{eq:d+1:sylv}
	\end{equation}
	for each $\ell \in [d-3]$. For this, let us first observe that the right hand side of \eqref{eq:d+1:sylv} is increasing in $\ell$ in the range $1 \le \ell \le d-3$. Indeed, if $\ell \in [d-4]$, one has $s_{\ell+1}^{d- \ell -1} \le s_{\ell+2}^{d-\ell -2}$, which can be verified using the estimate $s_{\ell+2} \ge s_{\ell+1} (s_{\ell+1} - 1)$. This implies that it suffices to check  \eqref{eq:d+1:sylv} for $\ell =1$, in which case it amounts to \(d+1 \le 3^{d-2}\), which can be easily verified by induction on $d \ge 4$.  
	
	We have shown $y_1 \le \ldots \le y_{d-2}$. Let us now check $f_\ell^\ast \ge \frac{1}{y_d}$ for all $\ell \in [d-2]$ by showing $y_{d-2} \le y_d$. Inequality $y_{d-2} \le y_d$ amounts to $(s_{d-2}-1)(d+1) \le s_{d-1}$. The stronger inequality $d+1 \le s_{d-2}$ can be checked by induction on $d \ge 5$.
	
	To conclude the proof, it remains to verify $f_{d-1}^\ast \ge \frac{1}{y_d}$. For this, we estimate $f_{d-1}(\alpha)$ again from below by a quadratic function in $\alpha$: 
	\[
		f_{d-1}(\alpha) \ge \frac{1}{s_{d-1} -1} \left( \frac{1}{s_{d-1}-1} -d \alpha \right) \alpha.
	\]
	 The quadratic function on the right hand side is a concave downward parabola. Hence, it takes its minimum on the segment $\left[ \frac{1}{(d+1)(s_d-1)}, \frac{1}{(d+1)(s_{d-1} -1)} \right]$ at one of the two endpoints. So, we are left with checking $f_{d-1}\left(\frac{1}{(d+1)(s_d-1)}\right) \ge \frac{1}{(d+1)(s_d-1)^2}$ and $f_{d-1}\left(\frac{1}{(d+1)(s_{d-1}-1)}\right) \ge \frac{1}{(d+1)(s_d-1)^2}$. This can be easily verified.
\end{proof}

\begin{proof}[Proof of Theorem~\ref{thm:vol:simplices}]
	For $d=1$, the inequality is trivial. For $d=2$, the inequality is true in view of Scott's result \cite{MR0430960} which implies $s(2,k) = 2 (k+1)$ for $k \ge 2$ and $s(2,1) = 4{.}5$. For $d=3$, the asserted inequality is weaker than Pikhurko's inequality $s(d,k) \le 14{.}106 k$; see \cite{MR1996360}. In the case $d \ge 4$, the asserted inequality follows from  Theorem~\ref{thm:lower:bound:and:s} and Lemma \ref{lem:bound:univariate}.
\end{proof}

\subsection*{Acknowledgements} 

The second author was supported by a scholarship of the state of Sachsen-Anhalt, Germany. The third author is an affiliated researcher of Stockholm University and partly supported by the Vetenskapsr{\aa}det grant NT:2014-3991. We thank Christian Haase, Martina Juhnke-Kubitzke and Noleen K\"ohler for their interest. We are also grateful to Gabriele Balletti for checking some conjectures in the database \cite{arXiv:1612.08918}. Both authors are PI's in the Research Training Group Mathematical Complexity Reduction funded by the German Research Foundation (DFG-GRK 2297).


\bibliography{lit2}

\providecommand{\bysame}{\leavevmode\hbox to3em{\hrulefill}\thinspace}
\providecommand{\MR}{\relax\ifhmode\unskip\space\fi MR }
\providecommand{\MRhref}[2]{%
  \href{http://www.ams.org/mathscinet-getitem?mr=#1}{#2}
}
\providecommand{\href}[2]{#2}
\begin{thebibliography}{ZPW82}

\bibitem[AKN15]{MR3318147}
G.~Averkov, J.~Kr\"umpelmann, and B.~Nill, \emph{Largest integral simplices
  with one interior integral point: solution of {H}ensley's conjecture and
  related results}, Adv. Math. \textbf{274} (2015), 118--166.

\bibitem[Amb16]{Ambro}
F.~Ambro, \emph{{Variation of Log Canonical Thresholds in Linear Systems}},
  Int. Math. Res. Notices \textbf{14} (2016), 4418--4448.

\bibitem[Ave12]{MR2967480}
G.~Averkov, \emph{On the size of lattice simplices with a single interior
  lattice point}, SIAM J. Discrete Math. \textbf{26} (2012), no.~2, 515--526.

\bibitem[BK16]{arXiv:1612.08918}
G.~Balletti and A.~M Kasprzyk, \emph{Three-dimensional lattice polytopes with
  two interior lattice points}, arXiv preprint arXiv:1612.08918 (2016).

\bibitem[Bru16]{Brunink:MasterThesis:2016}
J.-M. Brunink, \emph{Volumenschranken f\"ur {G}itterpolytope mit wenigen
  inneren {G}itterpunkten}, Masterarbeit, Universit\"at Osnabr\"uck,
  Fachbereich Mathematik/Informatik (2016).

\bibitem[Hen83]{MR688412}
D.~Hensley, \emph{{Lattice vertex polytopes with interior lattice points}},
  Pacific J. Math. \textbf{105} (1983), no.~1, 183--191.

\bibitem[Kas10]{MR2760660}
A.~M. Kasprzyk, \emph{{Canonical toric {F}ano threefolds}}, Canad. J. Math.
  \textbf{62} (2010), no.~6, 1293--1309.

\bibitem[LZ91]{MR1138580}
J.~C. Lagarias and G.~M. Ziegler, \emph{{Bounds for lattice polytopes
  containing a fixed number of interior points in a sublattice}}, Canad. J.
  Math. \textbf{43} (1991), no.~5, 1022--1035.

\bibitem[Nil07]{ben}
B.~Nill, \emph{{Volume and lattice points of reflexive simplices}}, Discrete
  Comput. Geom. \textbf{37} (2007), no.~2, 301--320.

\bibitem[Pik01]{MR1996360}
O.~Pikhurko, \emph{{Lattice points in lattice polytopes}}, Mathematika
  \textbf{48} (2001), no.~1-2, 15--24.

\bibitem[Sco76]{MR0430960}
P.~R. Scott, \emph{{On convex lattice polygons}}, Bull. Austral. Math. Soc.
  \textbf{15} (1976), no.~3, 395--399.

\bibitem[Zie95]{MR1311028}
G.~M. Ziegler, \emph{Lectures on {P}olytopes}, Graduate Texts in Mathematics,
  vol. 152, Springer-Verlag, New York, 1995. \MR{1311028}

\bibitem[ZPW82]{MR651251}
J.~Zaks, M.~A. Perles, and J.~M. Wills, \emph{{On lattice polytopes having
  interior lattice points}}, Elem. Math. \textbf{37} (1982), no.~2, 44--46.

\end{thebibliography}
\bibliographystyle{amsalpha}

\end{document}